\documentclass[a4paper,10pt,reqno]{amsart}
\usepackage{amsmath,amssymb}
\usepackage{url}
\usepackage[all]{xy}

\usepackage{amsmath,amssymb}
\usepackage{url}
\usepackage{supertabular}
\usepackage{multirow}

\newcommand{\Q}[1]{\mathbb{Q}(\sqrt{#1})}
\newcommand{\Z}{\mathbb{Z}}
\newcommand{\N}{\mathbb{N}}

\newcommand{\Fro}{\mathfrak{o}}

\newcommand{\Frp}{\mathfrak{p}}

\newcommand{\Frq}{\mathfrak{q}}

\newcommand{\Fra}{\mathfrak{a}}

\newcommand{\Frn}{\mathfrak{n}}
\newcommand{\Frs}{\mathfrak{s}}
\newcommand{\Frv}{\mathfrak{v}}

\newtheorem{Rmk}{Remark}
\newtheorem{Lem}{Lemma}
\newtheorem{Thm}{Theorem}

\newtheorem{Prop}{Proposition}

\newcommand{\qf}[1]{\langle #1 \rangle}
\newcommand{\conj}[1]{\overline{#1}}
\newcommand{\Case}[1]{\noindent\mbox{\bf Case #1}}
\newcommand{\binlattice}[4]{\begin{pmatrix}%
  #1 & #2 \\
  #3 & #4
\end{pmatrix}}
\newcommand{\terlattice}[9]{{\begin{pmatrix}%
  #1 & #2 & #3 \\
  #4 & #5 & #6 \\
  #7 & #8 & #9
\end{pmatrix}}}

\newcommand{\comega}{\conj\omega}
\newcommand{\nequiv}{\not\equiv}

\newcommand{\gen}{\operatorname{gen}}

\newcommand{\rank}{\operatorname{rank}}

\newcommand{\ra}{\rightarrow}
\newcommand{\nra}{\not\rightarrow}

\newcommand{\univ}{{\!\dag}}

\title[Binary normal regular Hermitian lattices]
{Binary normal regular Hermitian lattices \\over imaginary quadratic fields}%

\subjclass[2000]{Primary 11E39; Secondary 11E20, 11E41}

\author[Byeong Moon Kim]{Byeong Moon Kim}
\address{Department of Mathematics, Kangnung National University, Kangnung, 210-702, Korea}%
\email{kbm@kangnung.ac.kr}

\author[Ji Young Kim]{Ji Young Kim}
\address{Department of Mathematical Sciences, Seoul National University, san 56-1 Shinrim-dong Gwanak-gu, Seoul, 151-747, Korea}%
\email{jykim@math.snu.ac.kr}

\author[Poo-Sung Park]{Poo-Sung Park}
\address{School of Computational Sciences, Korea Institute for Advanced Study, Hoegiro 87, Dongdaemun-gu, Seoul, 130-722, Korea}%
\email{sung@kias.re.kr}

\date{Received: date / Accepted: date}
% The correct dates will be entered by the editor

\begin{document}

\begin{abstract}
We call a positive definite Hermitian lattice regular if it represents all integers, which can be
represented locally by the lattice. We investigate binary regular Hermitian lattices over imaginary
quadratic fields $\Q{-m}$ and provide a complete list of the (normal) binary regular Hermitian
lattices.
\end{abstract}

\maketitle

\section{introduction}\label{sec:introduction}

Mathematicians or number theorists have been pursuing, for a long time, an algorithm to check
whether a given Diophantine equation has a solution without solving the equation directly. A basic
method to do this is to investigate the behavior of the residues modulo some numbers. This method
may show the insolvability of a Diophantine equation. But, it is still considered to be a hard
problem to determine when this method guarantees the solvability.

Dickson first called a positive definite quadratic form $f$ \emph{regular} if $f=n$ has an integral
solution for each $n$ such that $f \equiv n \pmod{m}$ has solutions for all positive integers $m$.
He computed all regular forms $x^2+ay^2+bz^2$, as a generalization of the famous unsolved problem,
Euler's idoneal number $a$ admitting $x^2+ay^2$ to be regular \cite{Dickson}. Jones extended
Dickson's results to the form $ax^2+by^2+cz^2$ \cite{bwJ-31}. His work contained a candidate and it
was immediately solved by himself and Pall \cite{bwJ-gP-39}.

The outstanding result about regular quadratic forms was achieved by Watson. He showed that there
are finitely many equivalence classes of primitive positive definite regular ternary quadratic
forms \cite{gW-53}, \cite{gW-54}. The complete list of 913 regular ternary forms including 22
candidates was given by Jagy, Kaplansky and Schiemann \cite{wJ-iK-aS-97}. On the contrary, Earnest
found an infinite family of regular quaternary forms \cite{agE-95} and the first author carried out
the determination of all regular diagonal quaternary forms \cite{bmK}.

The regularity of integral quadratic forms is naturally generalized to that of lattices over
totally real algebraic number fields. Recently the analogue of Watson's finiteness result for
regular positive definite ternary quadratic lattices over the ring $\Fro$ of $\Q{5}$ was proved
\cite{Chan-Earnest-Icaza-KimJY}.

Regular Hermitian lattices over imaginary quadratic fields are defined in a similar way. If a
Hermitian lattice represents all positive integers, it is trivially regular. We call such lattices
\emph{universal}. The universal Hermitian lattices were concentrative subjects studied by many
mathematicians including the authors in the last couple of decades \cite{agE-aK-97(1)},
\cite{Iwabuchi}, \cite{KimBM-KimJY-Park}, \cite{KimJH-Park}.

The \emph{regular} Hermitian lattices were also investigated and the finiteness of binary \emph{
normal} regular Hermitian lattices was proved by Earnest and Khosravani \cite{agE-aK-97(2)}.
Besides, several binary regular Hermitian diagonal lattices including a candidate $\qf{1, 14}$ over
$\Q{-7}$ were listed by Rokicki \cite{Rokicki}. But the inventory was limited to the diagonal
lattices, that is, $\Fra_1 v_1 \perp \Fra_2 v_2$ with two ideals $\Fra_1, \Fra_2 \subseteq \Fro$
and two vectors $v_1, v_2$. The obstruction against studying Hermitian lattices was that the matrix
presentation was unprovided. The authors, however, developed the formal matrix presentation and
were able to delve into universality and regularity of Hermitian lattices. So we obtained a
complete list of positive definite binary normal regular Hermitian lattices including nondiagonal
lattices with complete proofs. Rokicki's lattice $\qf{1,14}$ over $\Q{-7}$ is proved to be regular.
To do this, we developed a new method to calculate numbers represented by a quaternary quadratic
form. The binary subnormal regular lattices will appear in the next articles.

%%%%%%%%%%%%%%%%%%%%%%%%%%%%%%%%%%%%%%%%%%%%%%%%%%%%%%%%%%%%%%%%%%%%%%%%%%%%%%%%%%%%%%%%
\section{Preliminaries}\label{sec:Preliminaries}

In this section we give some notations and terminologies, which are adopted from \cite{Jacobson}
and \cite{O'Meara}. Let $\Fro$ be the ring of integers of the imaginary quadratic field $E =
\Q{-m}$. We have that $\Fro = \Z[\omega]$ with $\omega = \sqrt{-m}$ if $m \nequiv 3 \pmod{4}$ and
$\omega = \frac{1+\sqrt{-m}}{2}$ if $m \equiv 3 \pmod{4}$. A Hermitian space $V$ is a vector space
over $\Q{-m}$ with a map $H=H_V: V \times V \to \Q{-m}$ satisfying the following conditions:
\begin{enumerate}
    \item $H(v,w) = \conj{H(w,v)}$ for $v, w \in V$,
    \item $H(v_1+v_2, w) = H(v_1,w) + H(v_2,w)$ for $v_1, v_2, w \in V$,
    \item $H(a v,w) = aH(v,w)$ for $a \in \Q{-m}$ and $v, w \in V$.
\end{enumerate}
For brevity, we write $H(v) = H(v,v)$. A Hermitian lattice $L$ is defined as a finitely generated
$\Fro$-module in the Hermitian space $V$. From condition $(1)$, we know that
\[
    H(v) = H(v,v) = \conj{H(v,v)} = \conj{H(v)}.
\]
Hence $H(v)$ is always a rational integer. If $a = H(v)$ for some $v \in L$, we say that $a$ is
represented by $L$ and denote it by $a \ra L$. If $a$ cannot be represented by $L$, we denote it by
$a \nra L$. Through this article, we assume that $L$ is positive definite, i.e., $H(v) > 0$ for
nonzero vectors $v \in L$.

The localization of a lattice $L$ is defined by $L_\Frp = \Fro_\Frp \otimes_\Fro L$ and
$E$-ification of $L$ is defined by $EL = E \otimes_\Fro L$. If $n \ra L_\Frp$ for all primes $\Frp$
including $\infty$, then we write $n \ra \gen{L}$. The regularity of a Hermitian lattice $L$ can be
rephrased as follows: if $n \ra \gen{L}$, then $n \ra L$. Thus if the class number of $L$ is one,
then $L$ is trivially regular.

If a regular lattice $L$ is locally universal over $\Fro_\Frp$ for all prime ideals $\Frp$, then
$L$ is universal. Since all universal Hermitian lattices are already classified
\cite{agE-aK-97(1)}, \cite{Iwabuchi}, \cite{KimJH-Park}, we only consider nonuniversal regular
lattices through this article.

We define two ideals related to values of $H$. The norm $\Frn L$ of $L$ is the $\Fro$-ideal
generated by the set $\{H(v)| v \in L\}$. The scale $\Frs L$ of $L$ is the $\Fro$-ideal generated
by the set $\{H(v,w)| v,w \in L\}$. It is clear that $\Frn L \subseteq \Frs L$. If $\Frn L = \Frs
L$, then we call $L$ \emph{normal}. Otherwise, we call $L$ \emph{subnormal}. We investigate normal
lattices in this article.

The lattice can be written as
\[
    L = \Fra_1 v_1 + \Fra_2 v_2 + \dotsb + \Fra_n v_n
\]
with ideals $\Fra_i \subset \Fro$ and vectors $v_i \in V$. If these vectors are linearly
independent over $\Q{-m}$, then we say that $L$ is a $n$-ary lattice and $\rank L = n$.  The
significant invariant of $L$ is the \emph{volume} defined as
\[
    \Frv L = (\Fra_1 \conj{\Fra}_1) (\Fra_2 \conj{\Fra}_2) \dotsb (\Fra_n\conj{\Fra}_n) \det(H(v_i,v_j))_{1\le i,j \le n}.
\]
Note that this ideal is invariant for equivalent lattices, i.e., $\Frv L = \Frv(\phi L)$ for an
isometry $\phi$ and the volumes of sublattices are contained in $\Frv L$.

If $L$ is a free $\Fro$-module, then we can write $L = \Fro v_1 + \dotsb + \Fro v_n$. The matrix
$M_L = (H(v_i,v_j))_{1\le i,j \le n}$ is called the Gram matrix of $L$ and is a matrix presentation
of $L$. If the matrix is diagonal, we denote it by $\qf{H(v_1),H(v_2),\dotsc,H(v_n)}$. But, if $L$
is not a free $\Fro$-module, then $L= \Fro v_1 + \dotsb + \Fro v_{n-1} + \Fra v_n$ for some ideal
$\Fra \subset \Fro$ \cite[81:5]{O'Meara}. Since any ideal in $\Fro$ is generated by at most two
elements, we can write $L= \Fro v_1 + \dotsb + \Fro v_{n-1} + (\alpha, \beta)\Fro v_n$ for some
$\alpha, \beta \in \Fro$. Therefore, we consider the following $(n+1) \times (n+1)$-matrix as a
formal Gram matrix for $L$:
\[
    M_L =
    \begin{pmatrix}
        H(v_1, v_1)         & \dotsc & H(v_1, \alpha v_n)        & H(v_1, \beta v_n)\\
        \vdots              & \ddots & \vdots                    &\vdots\\
        H(\alpha v_n,  v_1) & \dotsc & H(\alpha v_n, \alpha v_n) & H(\alpha v_n, \beta v_n)\\
        H(\beta  v_n,  v_1) & \dotsc & H(\beta  v_n, \alpha  v_n) & H(\beta  v_n, \beta v_n)\\
    \end{pmatrix}.
\]
Note that this matrix is positive semi-definite, but this represents an $n$-ary positive definite
Hermitian lattice. A scaled lattice $L^a$ obtained from the Hermitian form $H_{L^a} = a H_L$ with
$a \in \Z$. If $M$ is a matrix representation of a lattice $L$, we write $aM$ for the matrix
representation of a scaled lattice $L^a$.

If the (formal) Gram matrix $(a_{ij})$ is called \emph{Minkowski-reduced} if it satisfies the
following conditions:

(1) $a_{ii} \leq a_{jj}$ for $i < j$,

(2) $a_{ii} \leq |2 a_{ij,1}|$ and $a_{ii} \leq |2 a_{ij,2}|$ for $a_{ij} = a_{ij,1} + \omega
a_{ij,2}$ with $i > j$.

A Hermitian lattice is defined over $\Z[\omega]$ and an algebraic integer is of the form $x
+y\omega$. So we can define values of $2n$-ary quadratic form
$\widetilde{H}(x_1,y_1,\dotsc,x_n,y_n)$ over $\Z$ as values of $n$-ary Hermitian form $H(x_1+y_1
\omega, \dotsc, x_n+y_n \omega)$ over $\Z[\omega]$. We call this quadratic form \emph{associated}
with the Hermitian lattice \cite{agE-aK-97(1)}. It is sometimes convenient to consider the
associated quadratic lattice instead of the Hermitian lattice. To distinguish the associated
lattice from the original one, we use the subscript $\Z$. For instance, the quadratic form
$\qf{1,1,1,1}_\Z$ is associated with the Hermitian form $\qf{1,1}$ over $\Z[\sqrt{-1}]$. We abuse
both terms quadratic \emph{forms} and quadratic \emph{lattices}. So we might say that a quadratic
form has a sublattice.

\section{Watson transformation}\label{sec:WatsonT}

For a positive integer $t$ and a Hermitian lattice $L$, let
\[
    \Lambda_t(L)=\{v\in L| H(v,w)\equiv0\pmod t \text{ for all } w\in L\}.
\]
Let the \emph{Watson transformation} $\lambda_t (L)$ of $L$ be the lattice defined by
\[
    \lambda_t(L)=\Lambda_t(L)^{1\over a},
\]
where $a$ is the maximal positive integer which divides all elements of
$\{B(v,w)|v,w\in\Lambda_t(L)\}$. If $L$ is regular, then  $\lambda_t(L)$ is also regular
\cite{gW-62}, \cite{wcC-aR-07}.

Throughout the remainder of this article, $L$ will always means an $\Fro$-lattice on a positive
definite binary Hermitian space over $E=\Q{-m}$, where $m$ is a positive integer. We may assume
that all lattices $L$ under discussion are primitive, since if a lattice is regular, then so is any
multiple of that lattice. Let $p$ be a rational odd prime and $\Frp = \Frp(p)$ be a prime ideal
over $p$ in the ring $\Fro$ of integers. Let $\Frq$ be a dyadic prime ideal of $\Fro$. In
convenience, if $L_{\Frp(p)}$ represents an element $n$ of $\Z_p$ then we say that $n \ra L$ over
$\Z_p$. In the dyadic case, we say that $n \ra L$ over $\Z_2$. The following lemma will be useful
for later discussion.

\begin{Lem}\label{Lem:local_universality_of_Watson_transformation}
Let $L$ be a primitive normal binary Hermitian lattice. Then $L_\Frp$ is isometric to
$\qf{\epsilon, \epsilon'p^k}$ for some nonnegative integer $k$ and units $\epsilon$, $\epsilon'$ of
$\Fro_\Frp$. Moreover, $\lambda_{p^{k'}}(L_\Frp)$ represents all elements of $\Z_p$ for some
nonnegative integer $k'$.
\end{Lem}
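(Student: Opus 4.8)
The plan is to split into two parts: the local structure of $L_\Frp$, and the claim about $\lambda_{p^{k'}}$.

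For the first part, since $p$ is an odd (nondyadic) prime, $\Fro_\Frp$ is either a (possibly ramified) extension of $\Z_p$ or a product $\Z_p \times \Z_p$ (split case); in all cases $2$ is a unit, so the Hermitian space/lattice theory over $\Fro_\Frp$ behaves like the classical nondyadic quadratic theory. By the standard Jordan splitting for unimodular-decomposable lattices over a nondyadic local ring (see \cite{Jacobson} or \cite{O'Meara}), a binary Hermitian lattice $L_\Frp$ decomposes as an orthogonal sum of modular components $L_\Frp \cong \qf{\epsilon} \perp \qf{\epsilon' p^k}$; here I would use that $L$ is \emph{normal} to guarantee that the leading component is genuinely unimodular (scale equals norm locally) so that the first diagonal entry is a unit, and primitivity of $L$ to normalize $k \geq 0$. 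So the first step is just: invoke the nondyadic Jordan decomposition, use normality to see the scale is $\Fro_\Frp$ and hence the first entry is a unit, and use primitivity to write the second as $\epsilon' p^k$ with $k \geq 0$.

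For the second part, the idea is that the Watson transformation $\lambda_{p^k}$ (applied to the local lattice) strips off the ``bad'' modular component. If $k = 0$, then $L_\Frp \cong \qf{\epsilon, \epsilon'}$ is unimodular binary, which over a nondyadic local ring is already universal (it represents every $p$-adic integer, including units and $p$-multiples, because a binary unimodular Hermitian/quadratic space over $\Z_p$ is isotropic enough — a binary form $\qf{\epsilon,\epsilon'}$ represents all of $\Z_p$ when $p$ is odd), so we may take $k' = 0$. If $k \geq 1$, I would compute $\Lambda_{p^k}(L_\Frp)$: with respect to the orthogonal basis $\{v_1, v_2\}$ where $H(v_1) = \epsilon$, $H(v_2) = \epsilon' p^k$, the condition $H(xv_1 + yv_2, w) \equiv 0 \pmod{p^k}$ for all $w$ forces $x \equiv 0 \pmod{p^k}$ (from pairing against $v_1$) and imposes nothing new on $y$, so $\Lambda_{p^k}(L_\Frp) = p^k \Fro_\Frp v_1 + \Fro_\Frp v_2$, whose scale is divisible by exactly $p^k$; dividing by $p^k$ gives $\lambda_{p^k}(L_\Frp) \cong \qf{\epsilon p^k, \epsilon'}$. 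This is again binary with one unit entry, and iterating the Watson transformation the appropriate number of times (at most once more, since $\qf{\epsilon p^k, \epsilon'} \cong \qf{\epsilon', \epsilon p^k}$ has the same shape with exponent $k$ again — one must be slightly careful and instead use that after at most a bounded number of steps the exponent strictly drops, actually the cleanest route: $\lambda_{p^k}$ applied to $\qf{\epsilon,\epsilon' p^k}$ yields $\qf{\epsilon', \epsilon p^0}=\qf{\epsilon',\epsilon}$ after rescaling — here I should recompute carefully, the divisor $a$ is $p^k$ not $p^{2k}$, so the result is $\qf{\epsilon p^{?}, \epsilon'}$; the point is the new lattice is unimodular or has strictly smaller modular exponent) we reach a unimodular binary lattice, which is universal over $\Z_p$. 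So setting $k'$ to be that exponent (essentially $k$, or $k$ together with one cleanup step) finishes it.

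The main obstacle, and the step requiring genuine care rather than bookkeeping, is the precise computation of the divisor $a$ in the definition of $\lambda_t$ and hence the exact modular type of $\lambda_{p^{k'}}(L_\Frp)$ — in particular checking that after the transformation the lattice is unimodular (equivalently that $k'$ can be chosen so the stripped lattice has trivial modular component) and that this reduced binary lattice really does represent \emph{all} of $\Z_p$, including both the units and every power of $p$. The latter is the classical fact that a nondyadic binary unimodular Hermitian lattice is $\Z_p$-universal; I would cite the relevant local representation result from \cite{Jacobson} or \cite{O'Meara} rather than reprove it. One should also keep the split case $\Fro_\Frp \cong \Z_p \times \Z_p$ in mind throughout, but there $L_\Frp$ corresponds to a pair of $\Z_p$-lattices and the same argument applies componentwise.
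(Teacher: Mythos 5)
Your first paragraph is fine and matches what the paper does (the paper simply asserts the diagonalization $\qf{\epsilon,\epsilon'p^k}$ from primitivity and normality), but the second half --- which is the actual content of the lemma --- has a genuine gap, and it sits exactly at the step you flagged but left unresolved. Compute the divisor $a$ honestly: for $L_\Frp\cong\qf{\epsilon,\epsilon'p^k}$ and $1\le s\le k$ one gets $\Lambda_{p^s}(L_\Frp)=p^s\Fro_\Frp v_1+\Fro_\Frp v_2\cong\qf{p^{2s}\epsilon,\,p^k\epsilon'}$ and $a=p^{\min(2s,k)}$, so $\lambda_{p^s}(L_\Frp)\cong\qf{p^{|k-2s|}\epsilon,\epsilon'}$ up to reordering. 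In particular $\lambda_{p^k}$ returns an isometric copy of $L_\Frp$, and every choice of $s$ preserves the parity of the exponent. So when $k$ is odd (in the non-split cases) you can never ``reach a unimodular binary lattice''; the terminal shape is $\qf{\epsilon,\epsilon'p}$. Your plan, which ends by invoking universality of a unimodular binary, therefore does not cover half the cases, and you never say why $\qf{\epsilon,\epsilon'p}$ should represent all of $\Z_p$.

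That missing justification is the real point, and it cannot be supplied by the analogy you lean on, because in the ``classical nondyadic quadratic theory'' a binary $\Z_p$-form is never universal: $\qf{\epsilon,\epsilon'}$ misses $p\cdot(\text{unit})$ when anisotropic, and $\qf{\epsilon,\epsilon'p}$ misses half the units. What makes the Hermitian statement true is that representing $n\in\Z_p$ by the binary Hermitian lattice means representing $n$ by the associated \emph{quaternary} $\Z_p$-quadratic form, and its shape depends on how $p$ behaves in $E$. This is precisely the case analysis the paper carries out and your proposal omits: if $p\nmid m$ and $p$ is inert, the associated form of $\qf{\epsilon,\epsilon'p^r}$ is $\qf{1,m,p^r,mp^r}_{\Z_p}$, which is universal exactly for $r\in\{0,1\}$ (this is why the parity obstruction is harmless but must be addressed); if $p$ splits, the unit unary $\qf{\epsilon}$ already has isotropic associated binary form and is universal by itself; and if $p\mid m$ (ramified --- a case your write-up never treats), the unimodular binary has associated form $\qf{1,\epsilon\epsilon',p,\epsilon\epsilon'p}_{\Z_p}$, whose universality uses the ramified part and is not a statement about unimodular lattices at all. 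So you need to (i) fix the Watson bookkeeping so the target exponent is $r\in\{0,1\}$ rather than $0$, and (ii) replace ``nondyadic binary unimodular is universal'' by the verification, case by splitting type, that the associated quaternary form represents all of $\Z_p$.
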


\begin{proof}
Since $L$ is primitive normal, $L_\Frp$ is isometric to $\qf{\epsilon,\epsilon'p^k}$ for some
nonnegative integer $k$ and units $\epsilon$, $\epsilon'$ of $\Fro_\Frp$. We may assume that any
unary $\Fro_\Frp$-lattice is not isotropic. Otherwise $L_\Frp$ represents all elements of $\Z_p$.

If $(p,m)=1$ and $\Frp=p\Fro$, $k=2\ell+r$ for some $\ell\in\Z$ and $r=0,1$, then,
$\lambda_{p^{2\ell}}(L_\Frp)$ is isometric to $\qf{\epsilon, \epsilon'p^r}$. Since the Hermitian
lattice $\qf{\epsilon,\epsilon'p^r}$ over $\Fro_\Frp$ provides the associated quadratic lattice
$\qf{\epsilon,\epsilon m,\epsilon'p^r,\epsilon'mp^r}_{\Z_p}$ $\cong$ $\qf{1,m,p^r,mp^r}_{\Z_p}$
over $\Z_p$, $\lambda_{p^{2\ell}}(L_\Frp)$ represents all elements of $\Z_p$.

If $(p,m)=1$ and $\Frp \ne p\Fro$, then $\lambda_{p^{k-1}}(L_\Frp)$ is isometric to $\qf{\epsilon,
\epsilon'p}$. Since the Hermitian lattice $\qf{\epsilon, \epsilon'p}$ provides the associated
quadratic lattice $\qf{\epsilon, \epsilon m,\epsilon'p, \epsilon'mp}_{\Z_p}$ $\cong$ $\qf{1,
m,p,mp}_{\Z_p}$ over $\Z_p$, $\lambda_{p^{k-1}}(L_\Frp)$ represents all elements of $\Z_p$.

If $p|m$, then $\Frp \ne p\Fro$. So $\lambda_{p^{k}}(L_\Frp)$ is isometric to $\qf{\epsilon,
\epsilon'}$. Since the Hermitian lattice $\qf{\epsilon,\epsilon'}$ over $\Fro_\Frp$ provides the
associated quadratic lattice $\qf{\epsilon,\epsilon m,\epsilon',\epsilon'm}_{\Z_p}$ $\cong$
$\qf{1,\epsilon \epsilon',p, \epsilon\epsilon'p}_{\Z_p}$ over $\Z_p$, $\lambda_{p^{k}}(L_\Frp)$
represents all elements of $\Z_p$.
\end{proof}

\begin{Lem}
Let $L$ be a primitive normal binary Hermitian lattice. Then $L_\Frq$ is isometric to
$\qf{\epsilon, \epsilon'2^k}$ for some nonnegative integer $k$ and units $\epsilon$, $\epsilon'$ of
$\Fro_\Frq$. Moreover, $\lambda_{2^{k'}}(L_\Frq)$ represents all elements of $\Z_p$ for some
nonnegative integer $k'$.
\end{Lem}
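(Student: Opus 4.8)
The plan is to follow the proof of Lemma~\ref{Lem:local_universality_of_Watson_transformation}, replacing the odd prime $\Frp$ by the dyadic prime $\Frq$ and keeping careful track of how $2$ decomposes in $E=\Q{-m}$.

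I would first establish the diagonalization. Since $L$ is normal, so is $L_\Frq$, and a normal Hermitian lattice over a dyadic local ring splits into unary components --- this is precisely where normality, as opposed to mere integrality, is needed, since a subnormal dyadic lattice can fail to be diagonalizable. Hence $L_\Frq\cong\qf{a_1,a_2}$ with $a_1,a_2$ rational $2$-adic integers, and primitivity together with normality forces $\Frn L_\Frq=\Frs L_\Frq=\Fro_\Frq$, so one of $a_1,a_2$ is a unit; writing the other as $\epsilon'2^k$ gives $L_\Frq\cong\qf{\epsilon,\epsilon'2^k}$ with $\epsilon,\epsilon'\in\Fro_\Frq^\times$ and $k\ge 0$. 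As in the odd case I may then assume that no unary $\Fro_\Frq$-sublattice of $L_\Frq$ is isotropic, since otherwise $L_\Frq$ already represents every element of $\Z_2$ and $k'=0$ works.

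Next I would split into three cases according to the decomposition of $2$ in $E$, exactly parallel to the three cases of Lemma~\ref{Lem:local_universality_of_Watson_transformation}. If $2$ splits in $E$, then some unary sublattice of $L_\Frq$ is isotropic, so this case is already disposed of. If $2$ is inert, then $\Frq=2\Fro$ and $E_\Frq/\mathbb{Q}_2$ is the unramified quadratic extension; writing $k=2\ell+r$ with $r\in\{0,1\}$ and applying Watson transformations $\lambda_{2^{j}}$ with suitable exponents (iterated if necessary), one reduces $L_\Frq$ to a lattice of the shape $\qf{\epsilon,\epsilon'2^r}$. If $2$ is ramified, then $\Frq\ne 2\Fro$ and $E_\Frq/\mathbb{Q}_2$ is ramified, and an analogous computation --- now with the appropriate powers of $\Frq$ --- again reduces $L_\Frq$, after Watson transformations, to a lattice of the shape $\qf{\epsilon,\epsilon'2^r}$ with $r\in\{0,1\}$. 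In each of the two nontrivial cases I would then pass to the associated quaternary quadratic $\Z$-lattice of the reduced Hermitian lattice, namely $\epsilon N\perp\epsilon'2^rN$, where $N$ is the quadratic form associated to the unary Hermitian lattice $\qf{1}$ over $\Fro_\Frq$ (the norm form of $E_\Frq/\mathbb{Q}_2$), and verify that this quaternary $\Z_2$-lattice represents all of $\Z_2$. In the inert case $N$ represents precisely the $2$-adic integers of even valuation, each with arbitrary unit part, and a short elementary argument then shows that an orthogonal sum of two scaled copies of $N$ is $\Z_2$-universal. In the ramified case $N\cong\qf{1,\rho}_\Z$ over $\Z_2$ for a $2$-adic integer $\rho$ depending on $-m\bmod 8$, and $\Z_2$-universality of $\epsilon N\perp\epsilon'2^rN$ is established by a case analysis on $-m\bmod 8$ --- equivalently, by observing that this quaternary form is isotropic over $\mathbb{Q}_2$, hence $\Z_2$-universal after a direct check, except when it is the norm form of the quaternion division algebra over $\mathbb{Q}_2$, and that even in that residual instance (such as $\qf{1,1,1,1}_\Z$ over $\Z_2$) it represents all of $\Z_2$ by Lagrange's four-square theorem. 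Choosing $k'$ to be the largest exponent used in the Watson transformations above then finishes the proof.

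The main obstacle is this last, dyadic, step. In Lemma~\ref{Lem:local_universality_of_Watson_transformation} the reduced associated forms $\qf{1,m,p^r,mp^r}_\Z$ are visibly $\Z_p$-universal, essentially because every nondegenerate binary conic over $\mathbb{F}_p$ has a rational point; over $\Z_2$ there is no such soft argument, the quaternary forms $\epsilon N\perp\epsilon'2^rN$ need not even be isotropic over $\mathbb{Q}_2$, and one must argue instead through the explicit value sets of the ramified and unramified dyadic norm forms, with the ramified case genuinely depending on the residue of $-m$ modulo $8$. A secondary point requiring care is checking that the Watson transformations do reduce $L_\Frq$ to a lattice of the asserted shape over a ramified dyadic prime, where $v_\Frq(2)=2$ renders the interplay between divisibility by $2$ and divisibility by a uniformizer of $\Frq$ less transparent than in the odd-prime case.
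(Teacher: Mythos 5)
Your proposal is correct and takes essentially the same route as the paper: diagonalize $L_\Frq$ using normality and primitivity, split according to how $2$ behaves in $E$ (which is exactly the paper's case division $m\equiv 7,3\pmod 8$ and $m\equiv 1,2\pmod 4$), strip the power of $2$ with Watson transformations, and verify that the associated quaternary $\Z_2$-form, an orthogonal sum of two scaled copies of the local norm form, represents all of $\Z_2$ — the paper does the same, merely naming the resulting $\Z_2$-blocks and asserting their universality. The one soft spot is your closing aside in the ramified case: the anisotropic instance need not be $\Z_2$-isometric to $\qf{1,1,1,1}$ (for example $\qf{1,5,6,30}_{\Z_2}$, i.e.\ $\epsilon=1$, $\epsilon'=3$, $r=1$, $m=5$, is anisotropic over $\mathbf{Q}_2$ but not unimodular), so Lagrange's theorem alone does not settle it; however the direct residue/Hensel check that you name as your main method does handle it, and this is no less detail than the paper itself provides.
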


\begin{proof}
Since $L$ is primitive normal, $L_{\Frq}$ is isometric to $\qf{\epsilon, \epsilon'2^k}$ for some
nonnegative integer $k$ and units $\epsilon$, $\epsilon'$ of $\Fro_\Frq$.

If $m \equiv 7 \pmod8$, then the unary Hermitian lattice $\qf{\epsilon}$ over $\Fro_\Frq$ provides
the associated quadratic lattice $\binlattice{\epsilon}{\epsilon / 2}{\epsilon / 2}{(m+1)\epsilon
/4}_{\Z_2} \cong \binlattice{0}{1/2}{1/2}{0}_{\Z_2}$ over $\Z_2$. It represents all elements of
$\Z_2$. So does $L_{\Frq}$.

If $m \equiv 3 \pmod8$, let $k=2\ell+r$ for some $\ell\in\Z$ and $r=0,1$. Then
$\lambda_{4^{\ell}}(L_{\Frq})$ is isometric to $\qf{\epsilon,\epsilon'2^r}$. Since the Hermitian
lattice $\qf{\epsilon,\epsilon'2^r}$ over $\Fro_\Frq$ provides the associated quadratic lattice
$\binlattice{\epsilon}{\epsilon / 2}{\epsilon / 2}{(m+1)\epsilon/4}_{\Z_2} \perp
\binlattice{2^r\epsilon'}{\epsilon'2^{r-1}}{\epsilon'2^{r-1}}{{(m+1)2^{r-2}}\epsilon'}_{\Z_2}$
which is isometric to $\binlattice{1}{1/2}{1/2}{1}_{\Z_2} \perp \binlattice{1}{1/2}{1/2}{1}_{\Z_2}$
or $\binlattice{1}{1/2}{1/2}{1}_{\Z_2} \perp \binlattice2112_{\Z_2}$ over $\Z_2$,
$\lambda_{4^{\ell}}(L_{\Frq})$ represents all elements of $\Z_2$.

If $m \equiv 1 \pmod4$, let $k=2\ell+r$ for some $\ell\in\Z$ and $r=0,1$. Then
$\lambda_{4^{\ell}}(L_{\Frq})$ is isometric to $\qf{\epsilon,\epsilon'2^r}$. Since the Hermitian
lattice $\qf{\epsilon,\epsilon'2^r}$ over $\Fro_\Frq$ provides the associated quadratic lattice
$\qf{\epsilon, \epsilon m, 2^r\epsilon', 2^r\epsilon'm}_{\Z_2}$ over $\Z_2$,
$\lambda_{4^{\ell}}(L_{\Frq})$ represents all elements of $\Z_2$.

If $m \equiv 2 \pmod4$, then $\lambda_{2^{k}}(L_{\Frq})$ is isometric to $\qf{\epsilon,\epsilon'}$.
If $m=2m'$, Hermitian lattice $\qf{\epsilon,\epsilon'}$ over $\Fro_\Frq$ provides the associated
quadratic lattice $\qf{\epsilon,\epsilon m,\epsilon',\epsilon'm}_{\Z_2}$ over $\Z_2$. Since
quadratic lattice $\qf{\epsilon,\epsilon m,\epsilon',\epsilon'm}_{\Z_2}$ is isometric to
$\qf{\epsilon,\epsilon',2m'\epsilon,2m'\epsilon'}_{\Z_2}$, $\lambda_{2^{k}}(L_{\Frq})$ represents
all elements of $\Z_2$.
\end{proof}

Thus for all rational prime number $p$ including $2$, there is a nonnegative integer $s$ such that
$\lambda_{p^s}(L)$ represents all elements of $\Z_p$. So there are primes $p_1, p_2, \cdots, p_k$
and positive integers $s_1, s_2, \cdots, s_k$ such that $\widehat{L} = \lambda_{p_1^{s_1}} \circ
\lambda_{p_2^{s_2}} \circ \cdots \circ \lambda_{p_k^{s_k}} (L)$ is locally universal, which means
$\widehat{L}$ represents all elements of $\Z_p$ for all prime $p$. Since $\widehat{L}$ is regular,
$\widehat{L}$ is universal. From the works on binary universal Hermitian lattices
\cite{agE-aK-97(1)}, \cite{Iwabuchi}, \cite{KimJH-Park}, we have the following proposition.

\begin{Prop}
A binary normal regular lattice exists over the field $\Q{-m}$ if and only if $m$ is
\[
    1, 2, 3, 5, 6, 7, 10, 11, 15, 19, 23 \text{ or } 31.
\]
\end{Prop}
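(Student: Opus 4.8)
The plan is to reduce the problem to the already-classified list of binary normal universal Hermitian lattices via the chain of Watson transformations constructed just above the statement. The ``only if'' direction is the substantive one. Suppose $L$ is a binary normal regular lattice over $\Q{-m}$ that is \emph{not} universal (universal ones are handled by the cited classification, so we may assume nonuniversality). By Lemma~\ref{Lem:local_universality_of_Watson_transformation} and the second lemma, for each rational prime $p$ there is an exponent $s$ with $\lambda_{p^s}(L)$ locally universal at $p$; applying these transformations successively over the finitely many primes dividing $\Frv L$ (away from which $L$ is already locally universal, being primitive normal binary of unimodular type at good primes), we obtain $\widehat L=\lambda_{p_1^{s_1}}\circ\cdots\circ\lambda_{p_k^{s_k}}(L)$ which is locally universal everywhere. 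Since the Watson transformation preserves regularity (the result of Watson / Chan--Rokicki quoted in Section~\ref{sec:WatsonT}) and $\widehat L$ remains a binary normal lattice over the same field $\Q{-m}$, $\widehat L$ is regular and locally universal, hence universal. Therefore $\Q{-m}$ admits a binary normal universal Hermitian lattice, and by \cite{agE-aK-97(1)}, \cite{Iwabuchi}, \cite{KimJH-Park} this forces $m\in\{1,2,3,5,6,7,10,11,15,19,23,31\}$.

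For the ``if'' direction, one must exhibit, for each of the twelve listed values of $m$, an actual binary normal regular (indeed, one may simply take a universal) Hermitian lattice over $\Q{-m}$. The cleanest route is to invoke the same classification of binary normal universal Hermitian lattices: for each such $m$ the classification provides an explicit binary normal universal lattice, and every universal lattice is trivially regular (if $n\ra\gen L$ then in particular $n$ is a positive integer, hence $n\ra L$). Concretely one lists, e.g., $\qf{1,1}$ over $\Q{-1}$, $\qf{1,1}$ over $\Q{-2}$, $\qf{1,1}$ over $\Q{-3}$, and the appropriate binary universal lattices for $m=5,6,7,10,11,15,19,23,31$ drawn from the cited works; no further argument is needed once universality is known.

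The one point requiring care — and the step I expect to be the main obstacle — is justifying that the finite composition $\widehat L$ of Watson transformations is genuinely \emph{locally universal at every prime simultaneously}, not merely at each prime in isolation. The subtlety is that $\lambda_{q^s}$ for a prime $q\ne p$ could in principle disturb the local behavior at $p$; one must check that applying $\lambda_{q^s}$ does not destroy local universality already achieved at $p$. Since $\Lambda_{q^s}(L)_p=L_p$ for $p\ne q$ (the congruence condition defining $\Lambda_{q^s}$ is vacuous away from $q$, as $q^s$ is a unit in $\Z_p$), and the rescaling by the content $a$ is by a $q$-power, hence a unit at $p$, the localization $\lambda_{q^s}(L)_p$ is isometric to $L_p$ (up to a scalar that is a $p$-adic unit, which does not affect which units of $\Z_p$ are represented and hence does not affect universality at $p$). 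Thus local universality at $p$, once attained, is stable under all subsequent transformations at other primes, and the composition is well-defined and locally universal everywhere. The remainder is bookkeeping: the set of ``bad'' primes is finite because $L$ is binary normal primitive, so $L_\Frp\cong\qf{\epsilon,\epsilon'}$ is unimodular — hence locally universal — for all $\Frp$ not dividing $2\,\Frv L$.
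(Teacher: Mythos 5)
Your proposal is correct and follows essentially the same route as the paper: apply the two local lemmas to build a chain of Watson transformations $\widehat L=\lambda_{p_1^{s_1}}\circ\cdots\circ\lambda_{p_k^{s_k}}(L)$ that is regular and locally universal, hence universal, and then read off the admissible $m$ (in both directions) from the classification of binary universal Hermitian lattices in \cite{agE-aK-97(1)}, \cite{Iwabuchi}, \cite{KimJH-Park}. Your extra verification that $\lambda_{q^s}$ leaves the localizations at primes $p\ne q$ unchanged up to a unit scaling is a detail the paper leaves implicit, and it is argued correctly.
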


Note that the class number $h(\Q{-m})$ is one for $m = 1, 2, 3, 7, 11, 19$ and  $h(\Q{-m}) > 1$ for
$m = 5, 6, 10, 15, 23, 31$.

\section{Candidates of binary normal regular Hermitian lattices}\label{sec:Candidates}

In this section, we will find all candidates of binary normal regular Hermitian lattices over
imaginary quadratic fields $\Q{-m}$.

If there is a rational odd prime number $p_0$ such that $(p_0,m)=1$ and $L_{\Frp_0}$ does not
represent some element of $\Z_{p_0}$ where $\Frp_0 = \Frp_0(p_0)$ is a prime ideal of $\Fro$. Let
$p_1,p_2,\cdots,p_k$ be all rational prime numbers different from $p_0$ such that $L_{\Frp_i}$ does
not represent some element of $\Z_{p_i}$. Then there are positive integers $s_1,s_2,\cdots,s_k$
such that for all $i=1,2,\cdots,k$, $\lambda_{p_i^{s_i}}(L_{\Frp_i})$ represents all elements of
$\Z_{p_i}$. Let
\[
    \widehat{L} = \lambda_{p_1^{s_1}}\circ\lambda_{p_2^{s_2}}\circ\cdots\circ\lambda_{p_k^{s_k}}(L).
\]
Then $\widehat{L}$ is regular. So $\widehat{L}_\Frp$ represents all elements of $\Z_p$ for all
prime numbers $p$ and prime ideals $\Frp$ except $p = p_0$. Since $\widehat{L}_{\Frp_0}$ is
primitive and normal, $\widehat{L}_{\Frp_0}$ is isometric to $\qf{\epsilon, \epsilon'p_0^k}$ for
some positive integer $k$ and units $\epsilon, \epsilon'$ of $\Fro_{p_0}$. Since $(p_0,m)=1$,
$\qf{\epsilon}$ represents all units of $\Z_{p_0}$. So $\widehat{L}$ represents $1$ and $2$
locally. Since $L$ is regular, $\widehat{L}$ represents $1$ and $2$ globally. So $\widehat{L}$ is
isometric to $\qf{1} \perp M$ for some unary lattice $M$. If $m \not= 1, 2, 7$, then $\qf{1}$ does
not represent $2$. So $M$ represents $1$ or $2$. Thus $\widehat{L}$ contains $\qf{1,1}$ or
$\qf{1,2}$. Therefore $\widehat{L}$ represents all elements of $\Z_{p_0}$. This is a contradiction.
If $m = 1$ or $7$, then $\qf{1}$ cannot represent $3$. Since $\widehat{L}$ is regular, $3$ is not a
unit of $\Z_{p_0}$. So $p_0 = 3$. Similarly, if $m=2$, then $p_0 = 5$. We conclude that if $L_\Frp$
does not represent some element of $\Z_p$, then we have following cases:%
\begin{enumerate}
  \item $p = 2$
  \item odd prime $p$ divides $m$
  \item $\begin{cases}p=3 & \text{if } m=1,7, \\ p=5 & \text{if } m=2.
  \end{cases}$
\end{enumerate}

To find candidates of regular lattices with efficiency, we add a condition of volume $\Frv L$ of
$L$ as explained in the following lemma.

\begin{Lem}
Let $L$ be a binary Hermitian lattice over the imaginary quadratic field $\Q{-m}$. Let $p$ be a
rational odd prime and $\Frp = \Frp(p)$ be a prime ideal over $p$ in the ring $\Fro$ of integers
and let $\Frq$ be a dyadic prime ideal of $\Fro$.
    \begin{enumerate}
    \item[(1)] If $L_\Frp$ represents a unit in $\Z_p$ over $\Fro_\Frp$ and
    does not represent $p^k \epsilon$ for some nonnegative integer $k$, for some unit $\epsilon$
    in $\Z_p$ over $\Fro_\Frp$, then
    \[
    \Frv L \subset p^{k+1} \Fro.
    \]
    \item[(2)] If $L_\Frq$ represents a unit in $\Z_2$ over $\Fro_\Frq$ and
    does not represent $2^k \epsilon$ for some nonnegative integer $k$,
    for some unit $\epsilon$ in $\Z_2$ over $\Fro_\Frq$, then
        \begin{align*}
            \begin{cases}
            \Frv L \subset 2^{k+2} \Fro & \text{ if }~ m \equiv 1 \pmod{4}, \\
            \Frv L \subset 2^{k+3} \Fro & \text{ if }~ m \equiv 2 \pmod{4}, \\
            \Frv L \subset 2^{k+1} \Fro & \text{ if }~ m \equiv 3 \pmod{8}.
            \end{cases}
        \end{align*}
    \end{enumerate}
\end{Lem}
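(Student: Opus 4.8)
The plan is to turn the statement into a local assertion at one prime above $p$ (resp.\ $2$) and then into a computation of the values of a binary Hermitian form over $\Fro_\Frp$ (resp.\ of the quaternary quadratic form it induces over $\Z_2$). The first point to record is that $\Frv L$ is extended from $\Z$: each $\Fra_i\conj{\Fra}_i$ equals a rational‑integer ideal and $\det(H(v_i,v_j))$ is rational, so $\Frv L = d\,\Fro$ for a positive rational $d$, and $\Frv L\subset p^{k+1}\Fro$ is simply the condition $p^{k+1}\mid d$, which may be tested at any single prime of $E$ over $p$. Next, the hypothesis is vacuous at a prime that splits in $E$, since there a Hermitian lattice of positive rank represents all of $\Z_p$; so we may assume $\Frp$ (resp.\ $\Frq$) is inert or ramified. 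Finally, by inspecting the local norm $N_{E_\Frp/\mathbb{Q}_p}$ one sees that ``$L_\Frp$ represents a unit of $\Z_p$'' forces the leading Jordan constituent of $L_\Frp$ to be unimodular; for odd $p$ this says that in a diagonalization $L_\Frp\cong\qf{\alpha,\beta}$ the coefficient $\alpha$ is a unit, so that $\operatorname{ord}_p(d)=\operatorname{ord}_p(\beta)$.

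For part (1) I argue by contraposition: assuming $\Frv L\not\subset p^{k+1}\Fro$, i.e.\ $\operatorname{ord}_p(\beta)\le k$, I show $L_\Frp$ represents every $p^k\epsilon$. The unary piece $\qf\alpha$ already represents a large part of each level (all of $p^{2j}\Z_p^\times$ when $\Frp$ is inert, one unit square-class at each level when $\Frp$ is ramified, since there $N(\Fro_\Frp^\times)=(\Z_p^\times)^2$). The remaining classes at the levels below $\operatorname{ord}_p(\beta)$ are produced by the interaction $\alpha N(x)+\beta N(y)$: when $\operatorname{ord}_p(\alpha N(x))=\operatorname{ord}_p(\beta N(y))$ the leading terms combine, and a short computation with the local norm group — using $\Z_p^\times+\Z_p^\times=\Z_p$ when $\Frp$ is inert, and the behaviour of sums of unit square-classes when $\Frp$ is ramified — recovers every class up to level $\operatorname{ord}_p(\beta)-1$; choosing the common level small enough (possible exactly because $\operatorname{ord}_p(\beta)\le k$) reaches $p^k\epsilon$. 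The same computation may be carried out on the associated quadratic $\Z_p$-lattice $\qf{\alpha,\alpha m,\beta,\beta m}_{\Z_p}$ in the ramified case, as in Lemma~\ref{Lem:local_universality_of_Watson_transformation}. Thus any omitted $p^k\epsilon$ forces $\operatorname{ord}_p(\beta)\ge k+1$, i.e.\ $\Frv L\subset p^{k+1}\Fro$.

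For part (2) diagonalization over $\Fro_\Frq$ is unavailable in the ramified dyadic cases $m\equiv1,2\pmod4$, so I would run the analogous argument on the associated quaternary quadratic $\Z_2$-lattice $\widetilde L_\Frq$, using the Gram matrices and Jordan splittings recorded in the two dyadic lemmas above. Writing $L_\Frq\cong\qf{\epsilon,\epsilon'2^j}$ with $\operatorname{ord}_2(d)=j$, the point is again to determine the largest level $k$ at which a unit class can be omitted; a modular constituent of scaling $2^j$ begins to supply the complementary unit class at level $j-c$, where $c$ is dictated by the modulus needed to separate the index-$2$ subgroup $N_{E_\Frq/\mathbb{Q}_2}(\Fro_\Frq^\times)$ of $\Z_2^\times$ (equivalently, by the $2$-adic square-class bookkeeping): $c=1$ when $2$ is inert ($m\equiv3\pmod8$), where the obstruction is merely the parity of the level and the analysis mirrors the odd inert case; $c=2$ when $m\equiv1\pmod4$, where $N(\Fro_\Frq^\times)$ is distinguished modulo $4$; and $c=3$ when $m\equiv2\pmod4$ (write $m=2m'$), where it is distinguished only modulo $8$. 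In each subcase the largest omitted $2^k\epsilon$ then satisfies $j\ge k+c$, which is exactly $\Frv L\subset2^{k+c}\Fro$, and the bound is tight.

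The main obstacle is the ramified dyadic cases $m\equiv1,2\pmod4$: there $L_\Frq$ need not be diagonalizable, the $2$-adic unit square-classes modulo $8$ genuinely enter, and the constant must come out exactly right, since a single spurious power of $2$ in $\Frv L\subset2^{k+c}\Fro$ would make the estimate too weak for the enumeration of candidates in the next section. The case $m\equiv2\pmod4$, where $\widetilde L_\Frq$ is most degenerate at $2$, is the delicate one; part (1) for odd primes and the inert dyadic case are comparatively routine.
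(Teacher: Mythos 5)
Your proposal is sound and, for part (1), essentially reproduces the paper's argument: diagonalize $L_\Frp\cong\qf{a,bp^\ell}$, dispose of the split (isotropic) case where the hypothesis is vacuous, and show that $\ell\le k$ forces $p^k\epsilon\ra L_\Frp$ by letting the two diagonal constituents interact at a common level (the paper phrases this through the associated quaternary $\Z_p$-lattice $\qf{a,a',bp^\ell,b'p^{\ell(+1)}}_{\Z_p}$, you phrase it through the local norm group; these are the same computation). For part (2) your route differs in execution: the paper also diagonalizes, $L_\Frq\cong\qf{a,2^\ell b}$, then quotes the local universality of $\qf{a,2^rb}$ for small $r$ (established in the preceding dyadic lemma) and converts it into the inequality $\ell\ge k+c$ by a Watson-transformation/scaling step, whereas you locate the threshold directly by tracking which cosets of $N_{E_\Frq/\mathbb{Q}_2}(\Fro_\Frq^\times)$ the interaction term $2^{\ell-k}\cdot(\text{unit norm})$ can still move modulo the distinguishing modulus ($2$, $4$, $8$), which yields exactly $c=1,2,3$ for $m\equiv3\ (\mathrm{mod}\ 8)$, $m\equiv1$, $m\equiv2\ (\mathrm{mod}\ 4)$; since the achieved unit classes at a fixed level are unions of square classes, this mod-$8$ bookkeeping does suffice, so your method gives the same (sharp) constants and arguably isolates the reason for them more transparently than the paper's scaling step. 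One correction: your worry that ``diagonalization over $\Fro_\Frq$ is unavailable in the ramified dyadic cases'' is unfounded in this situation --- the hypothesis that $L_\Frq$ represents a unit of $\Z_2$ lets you split off a unimodular unary sublattice, which is precisely how the paper (and its two preceding lemmas, including Lemma~\ref{Lem:local_universality_of_Watson_transformation}) obtains $\qf{\epsilon,\epsilon'2^k}$; indeed you tacitly use this when you write $L_\Frq\cong\qf{\epsilon,\epsilon'2^j}$, so the detour through the quaternary $\Z_2$-lattice is optional rather than necessary. Also, in part (1) the phrase ``recovers every class up to level $\operatorname{ord}_p(\beta)-1$'' is slightly off target: what you need, and what your ``common level'' remark actually delivers, is coverage of every unit class at each level $k\ge\operatorname{ord}_p(\beta)$, since there both constituents contribute a unimodular binary piece in the unit parts.
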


\begin{proof}
(1) Since $L_\Frp$ represents a unit, $L_\Frp \cong \qf{a,bp^\ell}$ for some units $a, b \in \Z_p$
and some nonnegative integer $\ell$. If $\qf{a}$ is isotropic, then $p^k\epsilon \ra \qf{a}$ and
hence $p^k\epsilon \ra L_\Frp$. This is a contradiction. Therefore $\qf{a}$ is anisotropic. Assume
that $p \nmid m$. Then the associated quadratic lattice of $L_\Frp$ is isometric to
$\qf{a,a',bp^\ell,b'p^\ell}_{\Z_p}$ for some units $a',b' \in \Z_p$. If $\ell \le k$, then
$p^k\epsilon \ra L_\Frp$. Now assume that $p \mid m$. Then the associated quadratic lattice is
$\qf{a,a'p,bp^\ell,b'p^{\ell+1}}_\Z$. If $\ell \le k$, then $p^k\epsilon \ra L_\Frp$. Thus $\ell
\ge k+1$ and $\Frv L_\Frp = abp^\ell\Fro_\Frp \subset p^{k+1}\Fro_\Frp$.

(2) Since $L_\Frq$ represents a unit in $\Z_2$, $L_\Frq$ is isometric to $\qf{a, 2^\ell b}$ for
some units $a, b \in \Z_2$ and for some integer $\ell$.

Suppose $m \equiv 1 \pmod{4}$. If $\ell= 0, 1$ then $L_\Frq = \qf{a, 2^\ell b}$ represents all
elements of $\Z_2$. Hence we have $\ell \geq 2$. Since $\lambda_{2^{\ell-1}} (L_\Frq) = \qf{a, 2b}$
represents all elements of $\Z_2$, $2^{k-\ell+1} \epsilon \ra \lambda_{2^{\ell-1}} (L_\Frq) =
\qf{a, 2b}$ if $\ell \leq k+1$. Hence $(2^{k-\ell+1} \epsilon) 2^{\ell-1} = \epsilon 2^k \ra
L_\Frq$, which is a contradiction. So $\ell \geq k+2$ and $\Frv{L_\Frq} = ab2^\ell\Fro_\Frq \subset
2^{k+2}\Fro_\Frq$.

Suppose $m \equiv 2 \pmod{4}$. If $\ell= 0, 1, 2$ then $L_\Frq = \qf{a, 2^\ell b}$ represents all
elements of $\Z_2$. Hence we have $\ell \geq 3$. Since $\lambda_{2^{\ell-2}} (L_\Frq) = \qf{a, 4b}$
represents all elements of $\Z_2$, $2^{k-\ell+2} \epsilon \ra \lambda_{2^{\ell-2}} (L_\Frq) =
\qf{a, 4b}$ if $\ell \leq k+2$. Hence $(2^{k-\ell+2} \epsilon) 2^{\ell-2} = \epsilon 2^k \ra
L_\Frq$, which is a contradiction. So $\ell \geq k+3$ and $\Frv{L_\Frq} = ab2^\ell\Fro_\Frq \subset
2^{k+3}\Fro_\Frq$.

Suppose $m \equiv 3 \pmod{8}$. If $\ell= 0, 1$, then $L_\Frq = \qf{a, 2^\ell b}$ represents all
elements of $\Z_2$. Hence we have $\ell \geq 2$. Since $\lambda_{2^\ell} (L_\Frq) = \qf{a, b}$ or
$\qf{2a, b}$ which represents all elements of $\Z_2$, $2^{k-\ell} \epsilon \ra \lambda_{2^\ell}
(L_\Frq)$, if $\ell \leq k$. Hence $(2^{k-\ell} \epsilon) 2^\ell = \epsilon 2^k \ra L$, which is a
contradiction. So $\ell \geq k+1$ and $\Frv{L_\Frq} = ab2^\ell\Fro \subset 2^{k+1}\Fro_\Frq$.

\end{proof}

We adopt some notations from Conway-Sloane \cite{jhC-00}. The notation $p^d$ (resp. $p^e$) denotes
an odd (resp. even) power of $p$; if $p=2$, $u_k$ denotes a unit of form $8n + k ( k = 1,3,5,7 )$
and if $p$ is odd, $u_{+}$(resp. $u_{-}$) denotes a unit which is a quadratic residue (resp.
nonresidue) modulo $p$. Let
    \[
    [a, \alpha, b] := \binlattice{a}{\alpha}{\conj\alpha}{b}
    \]
for simplicity.

From now on, we assume $L$ is a binary normal regular Hermitian lattice which is not universal over
the imaginary quadratic field $\Q{-m}$. We begin with finding candidates of $L$ with the
information of $L_\Frp$, $L_\Frq$ and the volume $\Frv L$ of $L$ by following strategy: We assume
that $a$ is the minimum number such that $a \ra \gen L$ and $b$ is the minimum number such that $b
\ra \gen L$ and $b \nra \qf{a}$. Then $L$ contains a lattice
$\ell=\binlattice{a}{\alpha}{\conj{\alpha}}{b}$ for some $\alpha \in \Fro$ and also $\Frv \ell =
(ab-{\alpha}{\conj{\alpha}}) \Fro \subset \Frv L$. We call these two numbers $a$ and $b$
\emph{essential numbers}. If $\Frv \ell = \Frv L$, then $\ell$ is a candidate of $L$. If $\Frv \ell
\subsetneq \Frv L$, then we do more escalation which satisfies volume and rank conditions. Because
most of the finding process are routine, we will give tables instead of describing in detail except
special cases. In the table, all lattices are Minkowski-reduced. When we show that a binary
Hermitian lattice $L$ is not regular, we will give an integer $n$ such that $n \ra \gen L$ but $n
\nra L$. This number is called the \emph{exceptional number} of $L$.\\

\Case{I} $(m, p) = 1$.

%%%%%%%%%%%%%%%%%%%%%%%%%%%%%%%%%%%%%%%%%%%%%%%%%%%%%%%%%%%%%%%%%%%%%%%%%%%%%%
\Case{I [$m=1$]} Note that
\begin{gather*}
  u_1 \ra L \Longleftrightarrow u_5 \ra L \Longrightarrow 2u_1, 2u_5 \ra L
  \text{ over } \Z_2;\\
  u_3 \ra L \Longleftrightarrow u_7 \ra L \Longrightarrow 2u_3, 2u_7 \ra L
  \text{ over } \Z_2; \\
  u_{+} \ra L \Longleftrightarrow u_{-} \ra L \text{ over } \Z_3.
\end{gather*}

Since $L$ is normal, $L$ represents a number in $\Z_2^\times \cap \Z_3^\times$. Since $L$ is not
universal, $L$ cannot represent all elements of $\Z_2$ and $\Z_3$. According to the
representability of $u_1, u_3$ over $\Z_2$ and $3u_{+}$ over $\Z_3$, we have five cases (See Table
\ref{tbl:m=1}).

\begin{footnotesize}
\begin{table}[!h] % m=1
\centering

\begin{tabular}{p{0.9cm}p{4cm}|p{1.6cm}|p{0.8cm}|p{2.5cm}|p{0.6cm}} \hline%
\multicolumn{2}{c|}{Local Condition}                & \multicolumn{1}{c|}{Volume} & Ess.\#  & \multicolumn{1}{c|}{Reduced Lattice} & Exc.\# \\ \hline%

(1)   & $u_1, u_3 \ra L$ over $\Z_2$                & $\Frv L \subset 3^2 \Fro$ & $1,7$     & N.A.                &      \\ %
      & $3u_{+} \nra L$ over $\Z_3$                 &                           &           &                     &      \\ \hline%
(2)   & $u_1 \ra L$, $u_3 \nra L$ over $\Z_2$       & $\Frv L \subset 2^2\Fro$  & $1,21$    & $\qf{1,4}$          & none \\%
      & $3u_{+} \ra L$ over $\Z_3$                  &                           &           & $\qf{1,8}$          & none \\%
      &                                             &                           &           & $\qf{1,12}$         & 6    \\%
      &                                             &                           &           & $\qf{1,16}$         & none \\%
      &                                             &                           &           & $\qf{1,20}$         & 6    \\ \hline %
(3)   & $u_1 \ra L$, $u_3 \nra L$ over $\Z_2$       & $\Frv L \subset 2^2 \cdot 3^2\Fro$ %
                                                                                & $1,77$    & $\qf{1,36}$         & 14   \\ %
      & $3u_{+} \nra L$ over $\Z_3$                 &                           &           & $\qf{1,72}$         & 28   \\ \hline%
(4-1) & $u_1 \nra L$, $u_3, 2u_1 \ra L$ over $\Z_2$ & $\Frv L \subset 2^2\Fro$  & $2,3$     & $[2, -1+\omega, 3]$ & none \\%
      & $3u_{+} \ra L$ over $\Z_3$                  &                           &           &                     &      \\ \hline%
(4-2) & $u_1, 2u_1 \nra L$, $u_3 \ra L$ over $\Z_2$ & $\Frv L \subset 2^3\Fro$  & $3,7$     & $[3, -1+\omega, 6]$ & none \\ %
      & $3u_{+} \ra L$ over $\Z_3$                  &                           &                     & $[3,1,3]$           & none \\ \hline%
(5-1) & $u_1 \nra L$, $2u_1, u_3 \ra L$ over $\Z_2$ & $\Frv L \subset 2^2 \cdot 3^2\Fro$%
                                                                                & $2,7$     & N.A.                &      \\%
      & $3u_{+} \nra L$ over $\Z_3$                 &                           &           &                     &      \\ \hline%
(5-2) & $u_1, 2u_1 \nra L$, $u_3 \ra L$ over $\Z_2$ & $\Frv L \subset 2^3\cdot 3^2\Fro$%
                                                                                & $7,11$    & $[7,-2+\omega,11] $ & 4    \\ %
      & $3u_{+} \nra L$ over $\Z_3$                 &                           &           &                     &      \\ \hline %
\end{tabular}
\caption{escalation when $m=1$} %
\label{tbl:m=1}%%
\end{table}
\end{footnotesize}

%%%%%%%%%%%%%%%%%%%%%%%%%%%%%%%%%%%%%%%%%%%%%%%%%%%%%%%%%%%%%%%%%%%%%%%%%%%%%%
\Case{I [$m=2$]} Note that
\begin{gather*}
  u_1 \ra L \Longleftrightarrow u_3 \ra L \Longrightarrow 2u_1, 2u_3 \ra L
  \text{ over } \Z_2; \\
  u_5 \ra L \Longleftrightarrow u_7 \ra L \Longrightarrow 2u_5, 2u_7 \ra L
  \text{ over } \Z_2; \\
  u_{+} \ra L \Longleftrightarrow u_{-} \ra L \text{ over } \Z_5.
\end{gather*}
According to the representability of $u_1, u_5$ over $\Z_2$ and $5u_{+}$ over $\Z_5$, we have five
cases (See Table \ref{tbl:m=2}).

\begin{footnotesize}
\begin{table}[!h]% m=2
\centering

\begin{tabular}{p{0.9cm}p{4cm}|p{1.6cm}|p{0.8cm}|p{2.5cm}|p{0.6cm}} \hline%
\multicolumn{2}{c|}{Local Condition}                & \multicolumn{1}{c|}{Volume} & Ess.\#  & \multicolumn{1}{c|}{Reduced Lattice} & Exc.\# \\ \hline%

(1)   & $u_1, u_5 \ra L$ over $\Z_2$                & $\Frv L \subset 5^2 \Fro$   & $1,7$   & N.A.                &      \\ %
      & $5u_{+} \nra L$ over $\Z_5$                 &                             &         &                     &      \\ \hline%
(2)   & $u_1 \ra L$, $u_5 \nra L$ over $\Z_2$       & $\Frv L \subset 2^3 \cdot 5^2\Fro$  %
                                                                                  & $1,91$  & N.A.                &      \\%
      & $5u_{+} \nra L$ over $\Z_5$                 &                             &         &                     &      \\ \hline%
(3-1) & $u_1, 2u_5 \ra L$, $u_5 \nra L$ over $\Z_2$ & $\Frv L \subset 2^3\Fro$    & $1,10$  & $\qf{1,8 }$         & none \\ %
      & $5u_{+} \ra L$ over $\Z_5$                  &                             &         &                     &      \\ \hline%
(3-2) & $u_1 \ra L$, $u_5, 2u_5 \nra L$ over $\Z_2$ & $\Frv L \subset 2^4\Fro$    & $1,35$  & $\qf{1,16}$         & none \\%
      & $5u_{+} \ra L$ over $\Z_5$                  &                             &         & $\qf{1,32}$         & none \\ \hline%
(4)   & $u_1 \nra L$, $u_5 \ra L$ over $\Z_2$       & $\Frv L \subset 2^3\Fro$    & $5,7$   & $[5, -1+\omega, 7]$ & 8  \\ %
      & $5u_{+} \ra L$ over $\Z_5$                  &                             &         & $[5, -2+\omega, 6]$ & 2 \\ %
      &                                             &                             &         & $[5, -1+2\omega, 5]$& 4 \\ %
      &                                             &                             &         & $[4, -2+4\omega, 5]$& 2 \\ %
      &                                             &                             &         & $[2, \omega, 5]$    & none \\ \hline%
(5)   & $u_1 \nra L$, $u_5 \ra L$ over $\Z_2$       & $\Frv L \subset 2^3 \cdot 5^2 \Fro$ %
                                                                                  & $7,13$  & N.A.                &      \\%
      & $5u_{+} \nra L$ over $\Z_5$                 &                             &         &                     &      \\ \hline%
\end{tabular}
\caption{escalation when $m=2$} %
\label{tbl:m=2}%%
\end{table}
\end{footnotesize}

%%%%%%%%%%%%%%%%%%%%%%%%%%%%%%%%%%%%%%%%%%%%%%%%%%%%%%%%%%%%%%%%%%%%%%%%%%%%%%
\Case{I [$m=7$]} Note that
\begin{gather*}
  u_1 \ra L \Longleftrightarrow u_3 \ra L
  \Longleftrightarrow u_5 \ra L \Longleftrightarrow u_7 \ra L
  \Longrightarrow 2u_1, 2u_3, 2u_5, 2u_7 \ra L
  \text{ over } \Z_2; \\
  u_{+} \ra L \Longleftrightarrow u_{-} \ra L \text{ over } \Z_3.
\end{gather*}

So $3u_{+} \nra L$ over $\Z_3$. From the above conditions, $1, 5 \ra L$ and $\Frv L \subset 3^2
\Fro$. But no lattice satisfies the volume condition. \\

%%%%%%%%%%%%%%%%%%%%%%%%%%%%%%%%%%%%%%%%%%%%%%%%%%%%%%%%%%%%%%%%%%%%%%%%%%%%%%%%%%%
\Case{II} $(m, p) \ne 1$.

%%%%%%%%%%%%%%%%%%%%%%%%%%%%%%%%%%%%%%%%%%%%%%%%%%%%%%%%%%%%%%%%%%%%%%%%%%%%%%
\Case{II [$m=3$]} Note that
\begin{gather*}
  u_1 \ra L \Longleftrightarrow u_3 \ra L
  \Longleftrightarrow u_5 \ra L \Longleftrightarrow u_7 \ra L
  \text{ over } \Z_2; \\
  u_{+} \ra L \Longrightarrow 3u_{+} \ra L \text{ over } \Z_3;\quad
  u_{-} \ra L \Longrightarrow 3u_{-} \ra L \text{ over } \Z_3.
\end{gather*}
According to the representability of $2u_1$ over $\Z_2$ and $u_{+}, u_{-}$ over $\Z_3$, we have
five cases (See Table \ref{tbl:m=3}).

\begin{footnotesize}
\begin{table}[!h]
\centering

\begin{tabular}{p{0.9cm}p{4cm}|p{1.6cm}|p{0.8cm}|p{2.5cm}|p{0.6cm}} \hline%
\multicolumn{2}{c|}{Local Condition}                & \multicolumn{1}{c|}{Volume} & Ess.\#  & \multicolumn{1}{c|}{Reduced Lattice} & Exc.\# \\ \hline%

(1)   & $2u_1 \ra L$ over $\Z_2$                          & $\Frv L \subset 3\Fro$      & $1,10$  & $\qf{1,3}$          & none \\ %
      & $u_{+} \ra L$, $u_{-} \nra L$ over $\Z_3$         &                             &         & $\qf{1,6}$          & none \\ %
      &                                                   &                             &         & $\qf{1,9}$          & none \\ \hline%
(2-1) & $2u_1 \ra L$ over $\Z_2$                          & $\Frv L \subset 3\Fro$      & $2,3 $  & $\qf{2,3}$          & none \\%
      & $u_{+} \nra L$, $u_{-}, 3u_{+} \ra L$ over $\Z_3$ &                             &         & $[2,1,2]$           & none \\ \hline%
(2-2) & $2u_1 \ra L$ over $\Z_2$                          & $\Frv L \subset 3^2\Fro$    & $2,5 $  & $[2,1,5]$           & none \\ %
      & $u_{+}, 3u_{+} \nra L$, $u_{-} \ra L$ over $\Z_3$ &                             &         &                     &      \\ \hline %
(3)   & $2u_1 \nra L$ over $\Z_2$                         & $\Frv L \subset 2^2\Fro$    & $1,10$  & $\qf{1,4}$          & none \\%
      & $u_{+}, u_{-} \ra L$ over $\Z_3$                  &                             &         &                     &      \\ \hline%
(4)   & $2u_1 \nra L$ over $\Z_2$                & $\Frv L \subset 2^2 \cdot 3\Fro$     & $1,55$  & $\qf{1,12}$         & none \\ %
      & $u_{+} \ra L$, $u_{-} \nra L$ over $\Z_3$         &                             &         & $\qf{1,24}$         & 15   \\ %
      &                                                   &                             &         & $\qf{1,36}$         & none \\%
      &                                                   &                             &         & $\qf{1,48}$         & 15   \\ \hline%
(5-1) & $2u_1 \nra L$ over $\Z_2$                & $\Frv L \subset 2^2 \cdot 3\Fro$     & $3,5 $  & $[3, 1+\omega, 5]$  & none \\ %
      & $u_{+} \nra L$, $u_{-}, 3u_{+} \ra L$ over $\Z_3$ &                             &         &                     &      \\ \hline%
(5-2) & $2u_1 \nra L$ over $\Z_2$                & $\Frv L \subset 2^2 \cdot 3^2\Fro$   & $5,11$  & $[5,2,8]$           & none \\%
      & $u_{+}, 3u_{+} \nra L$, $u_{-} \ra L$ over $\Z_3$ &                             &         &                     &     \\ \hline%
\end{tabular}
\caption{escalation when $m=3$} %
\label{tbl:m=3}%%
\end{table}
\end{footnotesize}

%%%%%%%%%%%%%%%%%%%%%%%%%%%%%%%%%%%%%%%%%%%%%%%%%%%%%%%%%%%%%%%%%%%%%%%%%%%%%%
\Case{II [$m=5$]} Note that
\begin{gather*}
  u_1 \ra L \Longleftrightarrow u_5 \ra L
  \Longrightarrow 2u_3, 2u_7 \ra L
  \text{ over } \Z_2; \\
  u_3 \ra L \Longleftrightarrow u_7 \ra L
  \Longrightarrow 2u_1, 2u_5 \ra L
  \text{ over } \Z_2; \\
  u_{+} \ra L \Longrightarrow 5u_{+} \ra L \text{ over } \Z_5;\quad
  u_{-} \ra L \Longrightarrow 5u_{-} \ra L \text{ over } \Z_5.
\end{gather*}
According to the representability of $u_1, u_3$ over $\Z_2$ and
 $u_{+}, u_{-}$ over $\Z_5$, we have eight
cases (See Table \ref{tbl:m=5}). Since the ring $\Fro$ of integers is not a PID, we should also
consider nonfree lattices.

\begin{footnotesize}
\begin{table}[!h] %m=5
\centering

\begin{tabular}{p{0.9cm}p{4cm}|p{1.6cm}|p{0.8cm}|p{2.5cm}|p{0.6cm}} \hline%
\multicolumn{2}{c|}{Local Condition}                & \multicolumn{1}{c|}{Volume} & Ess.\#  & \multicolumn{1}{c|}{Reduced Lattice} & Exc.\# \\ \hline%

(1)     & $u_1, u_3 \ra L$ over $\Z_2$                      & $\Frv L \subset 5\Fro$    & $1,11$   & $\qf{1,5}$          & 15   \\       %
        & $u_{+} \ra L$, $u_{-} \nra L$ over $\Z_5$         &                           &          & $\qf{1,10}$         & none \\       %
        &                                                   &                           &          & $\qf{1}\perp 5[2,-1+\omega,3]$   %
                                                                                                                         & none \\ \hline%
(2-1)   & $u_1, u_3 \ra L$ over $\Z_2$                      & $\Frv L \subset 5\Fro$    & $2,3 $   & $[2,1,3]$           & 11   \\       %
        & $u_{+} \nra L$, $u_{-}, 5u_{+} \ra L$ over $\Z_5$ &                           &          & $[2, -1+\omega, 3] \perp \qf{5}$    %
                                                                                                                         & none \\ \hline%
(2-2)   & $u_1, u_3 \ra L$ over $\Z_2$                      & $\Frv L \subset 5^2\Fro$  & $2,3 $   & N.A.                &      \\       %
        & $u_{+}, 5u_{+} \nra L$, $u_{-} \ra L$ over $\Z_5$ &                           &          &                     &      \\ \hline%
(3-1)   & $u_1, 2u_1 \ra L$, $u_3 \nra \Z_2$ over $\Z_2$    & $\Frv L \subset 2^3 \Fro$ & $1,2 $   & N.A.                &      \\       %
        & $u_{+}, u_{-} \ra L$ over $\Z_5$                  &                           &          &                     &      \\ \hline%
(3-2)   & $u_1 \ra L$, $u_3, 2u_1 \nra \Z_2$ over $\Z_2$    & $\Frv L \subset 2^3\Fro$  & $1,13$   & $\qf{1,8}$          & none \\       %
        & $u_{+}, u_{-} \ra L$ over $\Z_5$                  &                           &          &                     &      \\ \hline%
(4-1)   & $u_1, 2u_1 \ra L$, $u_3 \nra L$ over $\Z_2$       & \multicolumn{4}{c}{N.A.}                                          \\       %
        & $u_{+} \ra L$, $u_{-} \nra L$ over $\Z_5$         & \multicolumn{4}{c}{}                                              \\ \hline%
(4-2-1) & $u_1 \ra L$, $u_3, 2u_1 \nra L$ over $\Z_2$       & $\Frv L \subset 2^3\cdot 5\Fro$                                            %
                                                                                        & $1,65$   & $\qf{1, 40}$        & none \\       %
        & $u_{+}, 5u_{-} \ra L$, $u_{-} \nra L$ over $\Z_5$ &                           &          &                     &      \\ \hline%
(4-2-2) & $u_1 \ra L$, $u_3, 2u_1 \nra L$ over $\Z_2$       & $\Frv L \subset 2^3\cdot 5^2\Fro$                                          %
                                                                                        & $1,209$  &$\qf{1, 200}$        & 44   \\       %
        & $u_{+} \ra L$, $u_{-}, 5u_{-} \nra L$ over $\Z_5$ &                           &          &                     &      \\ \hline%
(5-1)   & $u_1, 2u_1 \ra L$, $u_3 \nra L$ over $\Z_2$       & \multicolumn{4}{c}{N.A.}                                          \\       %
        & $u_{+} \nra L$, $u_{-} \ra L$ over $\Z_5$         & \multicolumn{4}{c}{ }                                             \\\hline %
(5-2-1) & $u_1 \ra L$, $u_3, 2u_1 \nra L$ over $\Z_2$       & $\Frv L \subset 2^3 \cdot 5 \Fro$                                                     %
                                                                                        & $5,13$   & $\qf{5,8}$          & 12   \\       %
        & $u_{+} \nra L$, $u_{-}, 5u_{+} \ra L$ over $\Z_5$ &                           &          &                     &      \\ \hline%
(5-2-2) & $u_1 \ra L$, $u_3, 2u_1 \nra L$ over $\Z_2$       & $\Frv L \subset 2^3 \cdot 5^2 \Fro$                                                   %
                                                                                        & $13,17$  & $[12,1+2\omega,17]$ & 8    \\       %
        & $u_{+}, 5u_{+} \nra L$, $u_{-} \ra L$ over $\Z_5$ &                           &          &                     &      \\ \hline%
(6-1)   & $u_1 \nra L$, $u_3, 2u_3 \ra L$ over $\Z_2$       & $\Frv L \subset 2^2\Fro$  & $2,3$    & $[2,-1+\omega,3] \perp \qf{4}$      %
                                                                                                                         & none \\       %
        & $u_{+}, u_{-} \ra L$ over $\Z_5$                  &                           &          &                     &      \\ \hline%
(6-2)   & $u_1, 2u_3 \nra L$, $u_3 \ra L$ over $\Z_2$       & $\Frv L \subset 2^3\Fro$  & $2,3$    & $[2,-1+\omega,3] \perp \qf{8}$      %
                                                                                                                         & 8    \\       %
        & $u_{+}, u_{-} \ra L$ over $\Z_5$                  &                           &          &                     &      \\ \hline%
(7-1)   & $u_1 \nra L$, $u_3, 2u_3 \ra L$ over $\Z_2$       & $\Frv L \subset 2^2 \cdot 5 \Fro$                                                     %
                                                                                        & $4,6$    & \multirow{3}{*}{\hspace{-1.2ex}\tiny$\terlattice{4}{-2+2\omega}{-2}{}{6}{1+\omega}{}{}{11}$} %
                                                                                                                         & 10   \\       %
        & $u_{+} \ra L$, $u_{-} \nra L$ over $\Z_5$         &                           &          &                           \\
        &                                                   &                           &          & \\ \hline%
(7-2)   & $u_1, 2u_3 \nra L$, $u_3 \ra L$ over $\Z_2$       & \multicolumn{4}{c}{N.A.}                                          \\       %
        & $u_{+} \ra L$, $u_{-} \nra L$ over $\Z_5$         & \multicolumn{4}{c}{ }                                             \\ \hline%
(8-1)   & $u_1 \nra L$, $u_3, 2u_3 \ra L$ over $\Z_2$       & $\Frv L \subset 2^2 \cdot 5 \Fro$                                                     %
                                                                                        & $2,3$    & $[2, -1+\omega,3]\perp\qf{20}$      %
                                                                                                                         & none \\       %
        & $u_{+} \nra L$, $u_{-} \ra L$ over $\Z_5$         &                           &          &                     &      \\ \hline%
(8-2)   & $u_1, 2u_3 \nra L$, $u_3 \ra L$ over $\Z_2$       & \multicolumn{4}{c}{N.A.}                                          \\       %
        & $u_{+} \nra L$, $u_{-} \ra L$ over $\Z_5$         & \multicolumn{4}{l}{}                                              \\ \hline%

\end{tabular}
\caption{escalation when $m=5$} %
\label{tbl:m=5}%%
\end{table}
\end{footnotesize}

For the case (1), after second escalation, $L \cong \qf{1,5}$ or $L$ contains a lattice
$\qf{1,10}$. If $L$ contains $\qf{1,10}$, then $L$ contains a binary lattice $\qf{1} \perp
\binlattice{10}{\alpha}{\conj\alpha}{5\beta}$ with $50\beta - \alpha{\conj\alpha} = 0$ and
$\alpha$, $\beta \in \Fro$. Thus $\alpha = -5+5\omega$ and $5\beta = 15$. We have candidates
$\qf{1, 10}$ and $\qf{1} \perp \binlattice{10}{-5+5\omega}{-5+5\comega}{15}$.

For the case (2-1), after second escalation, $L \cong \binlattice2113$ or $L$ contains a unary
unimodular lattice $\binlattice{2}{-1+\omega}{-1+\comega}{3}$ which splits $L$. Since $5 \ra L$ and
$5 \nra \binlattice{2}{-1+\omega}{-1+\comega}{3}$, we can get a candidate
$\binlattice{2}{-1+\omega}{-1+\comega}{3} \perp \qf{5}$ by comparing volume of $L$. Similarly, we
can get candidates for the cases (2-2), (6-1), (6-2) and (8-1).

For the cases (4-1), (5-1), (7-2) and (8-2), $u_{+}, u_{-} \ra \lambda_{5^k}(L)$ over $\Z_5$ for
some $k$ by Lemma \ref{Lem:local_universality_of_Watson_transformation}. Since $\lambda_{5^k}(L)$
cannot be regular by the case (3-1) or (6-2), $L$ cannot be regular.

For the case (7-1), after second escalation, $L \cong \binlattice{4}{2}{2}{6}$ or $L$ contains a
unary sublattice $2\binlattice2{-1+\omega}{-1+\comega}3$. The first lattice is not primitive and it
is not in our consideration. In the second case, since $11 \ra L$ and $11 \nra
2\binlattice2{-1+\omega}{-1+\comega}3$, we conclude that $L \cong
\terlattice{4}{-2+2\omega}{-2}{-2+2\comega}{6}{1+\omega}{-2}{1+\comega}{11}$ with $\Frv L =20\Fro$.

%%%%%%%%%%%%%%%%%%%%%%%%%%%%%%%%%%%%%%%%%%%%%%%%%%%%%%%%%%%%%%%%%%%%%%%%%%%%%%
\Case{II [$m=6$]} Note that
\begin{gather*}
  u_1 \ra L \Longleftrightarrow u_7 \ra L
  \Longrightarrow 2u_3, 2u_5 \ra L
  \text{ over } \Z_2; \\
  u_3 \ra L \Longleftrightarrow u_5 \ra L
  \Longrightarrow 2u_1, 2u_7 \ra L
  \text{ over } \Z_2; \\
  u_{+} \ra L \Longrightarrow 3u_{-} \ra L \text{ over } \Z_3;\quad
  u_{-} \ra L \Longrightarrow 3u_{+} \ra L \text{ over } \Z_3.
\end{gather*}

According to the representability of $u_1, u_3$ over $\Z_2$ and $u_{+}, u_{-}$ over $\Z_3$, we have
eight cases (See Table \ref{tbl:m=6}). Since the ring $\Fro$ of integers is not a PID, we should
also consider nonfree lattices.

\begin{footnotesize}
\begin{table}[!h]
\centering

\begin{tabular}{p{0.9cm}p{4cm}|p{1.6cm}|p{0.8cm}|p{2.5cm}|p{0.6cm}} \hline%
\multicolumn{2}{c|}{Local Condition}                & \multicolumn{1}{c|}{Volume} & Ess.\#  & \multicolumn{1}{c|}{Reduced Lattice} & Exc.\# \\ \hline%

(1-1) & $u_1, u_3 \ra L$ over $\Z_2$                      & $\Frv L \subset 3\Fro$    & $1,3 $    & $\qf{1,3}$          & none \\       %
      & $u_{+}, 3u_{+} \ra L$, $u_{-} \nra L$ over $\Z_3$ &                           &           &                     &      \\ \hline%
(1-2) & $u_1, u_3 \ra L$ over $\Z_2$                      & $\Frv L \subset 3^2\Fro$  & $1,13 $   & $\qf{1,9}$          & 27   \\       %
      & $u_{+} \ra L$, $u_{-}, 3u_{+} \nra L$ over $\Z_3$ &                           &           &                     &      \\ \hline%
(2-1) & $u_1, u_3 \ra L$ over $\Z_2$                      & $\Frv L \subset 3^2\Fro$  & $2,3 $    & $[2,\omega,3]\perp\qf{9}$ & 26 \\   %
      & $u_{+}, 3u_{-} \nra L$, $u_{-} \ra L$ over $\Z_3$ &                           &           &                     &      \\ \hline%
(2-2) & $u_1, u_3 \ra L$ over $\Z_2$                      & $\Frv L \subset 3\Fro$    & $2,3 $    & $[2,0,3]$           & 6    \\       %
      & $u_{+} \nra L$, $u_{-}, 3u_{-} \ra L$ over $\Z_3$ &                           &           & $[2,\omega,3]\perp\qf{3}$ & 9  \\   %
      &                                                   &                           &           & $[2,\omega,3]\perp3[2,\omega,3]$    %
                                                                                                                        & none \\ \hline%
(3-1) & $u_1, 2u_1 \ra L$, $u_3 \nra L$ over $\Z_2$       & $\Frv L \subset 2^3\Fro$  & $1,2 $    & N.A.                &      \\       %
      & $u_{+}, u_{-} \ra L$ over $\Z_3$                  &                           &           &                     &      \\ \hline%
(3-2) & $u_1 \ra L$, $2u_1, u_3 \nra L$ over $\Z_2$       & $\Frv L \subset 2^4\Fro$  & $1,17 $   & $\qf{1,16}$         & 7    \\       %
      & $u_{+}, u_{-} \ra L$ over $\Z_3$                  &                           &           &                     &      \\ \hline%
(4)   & $u_1 \ra L$, $u_3 \nra L$ over $\Z_2$             & \multicolumn{4}{c}{N.A.}                                           \\       %
      & $u_{+} \ra L$, $u_{-} \nra L$ over $\Z_3$         & \multicolumn{4}{c}{}                                               \\ \hline%
(5)   & $u_1 \ra L$, $u_3 \nra L$  over $\Z_2$            & \multicolumn{4}{c}{N.A.}                                           \\       %
      & $u_{+} \nra L$, $u_{-} \ra L$ over $\Z_3$         & \multicolumn{4}{l}{}                                               \\ \hline%
(6)   & $u_1 \nra L$, $u_3 \ra L$ over $\Z_2$             & $\Frv L \subset 2^3\Fro$  & $2,3 $    & $[2,\omega,3]\perp\qf{8}$ & 6 \\    %
      & $u_{+}, u_{-} \ra L$ over $\Z_3$                  &                           &           &                    &       \\ \hline%
(7)   & $u_1 \nra L$, $u_3 \ra L$ over $\Z_2$             & \multicolumn{4}{c}{N.A.}                                           \\       %
      & $u_{+} \ra L$, $u_{-} \nra L$ over $\Z_3$         & \multicolumn{4}{c}{}                                               \\ \hline%
(8)   & $u_1 \nra L$, $u_3 \ra L$ over $\Z_2$             & \multicolumn{4}{c}{N.A.}                                           \\       %
      & $u_{+} \nra L$, $u_{-} \ra L$ over $\Z_3$         & \multicolumn{4}{c}{}                                               \\ \hline%
\end{tabular}
\caption{escalation when $m=6$} %
\label{tbl:m=6}%%
\end{table}
\end{footnotesize}

For the case (2-2), after second escalation, $L \cong \binlattice2003$ or $L$ contains
$\binlattice2\omega\comega3$ by the volume condition. For the second case, note that $6 \ra L$ and
$6 \nra \binlattice2\omega\comega3$. Since $\binlattice2\omega\comega3$ is a unary sublattice which
splits $L$, $L \cong \binlattice2\omega\comega3 \perp \qf{3}$ or $L$ contains a lattice
$\binlattice2\omega\comega3 \perp \qf{6}$ by the volume condition. Since $9$ is an exceptional
number of $\binlattice2\omega\comega3 \perp \qf{3}$, it is not regular.
For the last case note that $15 \ra L$ and $15 \nra \binlattice2\omega\comega3 \perp \qf{6}$. So
$L$ contains $\binlattice2{\omega}{\comega}3 \perp \binlattice6{\delta}{\conj\delta}{3\rho}$ with
$18\rho -\delta{\conj\delta} = 0$ with $\delta, \rho \in \Fro$. Then we have a candidate
$\binlattice2{\omega}{\comega}3 \perp 3\binlattice2{\omega}{\comega}3$ with volume $3\Fro$. Note
that this lattice is isometric to the \emph{free} lattice $\binlattice9{4\omega}{4\comega}{11}$.
Similarly, we can get candidates for the cases (2-1) and (6).

For the case (4), (5), (7) and (8), $u_{+}, u_{-} \ra \lambda_{3^k}(L)$ over $\Z_3$ for some $k$ by
Lemma \ref{Lem:local_universality_of_Watson_transformation}. since $\lambda_{3^k}(L)$ cannot be
regular by the case (3-1), (3-2) or (6), $L$ cannot be regular.

%%%%%%%%%%%%%%%%%%%%%%%%%%%%%%%%%%%%%%%%%%%%%%%%%%%%%%%%%%%%%%%%%%%%%%%%%%%%%%
\Case{II [$m=7$]} Note that
\begin{gather*}
  u_1 \ra L \Longleftrightarrow u_3 \ra L
  \Longleftrightarrow u_5 \ra L \Longleftrightarrow u_7 \ra L
  \Longrightarrow 2u_1, 2u_3, 2u_5, 2u_7 \ra L
  \text{ over } \Z_2; \\
  u_{+} \ra L \Longrightarrow 7u_{+} \ra L \text{ over } \Z_7;\quad
  u_{-} \ra L \Longrightarrow 7u_{-} \ra L \text{ over } \Z_7.
\end{gather*}
According to the representability of  $u_{+}, u_{-}$ over $\Z_7$, we have two cases (See Table
\ref{tbl:m=7}).

\begin{footnotesize}
\begin{table}[!h]
\centering

\begin{tabular}{p{0.9cm}p{4cm}|p{1.6cm}|p{0.8cm}|p{2.5cm}|p{0.6cm}} \hline%
\multicolumn{2}{c|}{Local Condition}                & \multicolumn{1}{c|}{Volume} & Ess.\#  & \multicolumn{1}{c|}{Reduced Lattice} & Exc.\# \\ \hline%

(1)   & $u_{+} \ra L$, $u_{-} \nra L$ over $\Z_7$   & $\Frv L \subset 7 \Fro$   & $1,15$   & $\qf{1,7}$          & none   \\       %
      &                                             &                           &          & $\qf{1,14}$         & none   \\ \hline%
(2)   & $u_{+} \nra L$, $u_{-} \ra L$ over $\Z_7$   & $\Frv L \subset 7 \Fro$   & $3, 5$   & $[3,1,5]$           & 7      \\%
      &                                             &                           &          & $[3,\omega,3]$      & none   \\ \hline%
\end{tabular}
\caption{escalation when $m=7$} %
\label{tbl:m=7}%%
\end{table}
\end{footnotesize}

%%%%%%%%%%%%%%%%%%%%%%%%%%%%%%%%%%%%%%%%%%%%%%%%%%%%%%%%%%%%%%%%%%%%%%%%%%%%%%
\Case{II [$m=10$]} Note that
\begin{gather*}
  u_1 \ra L \Longleftrightarrow u_3 \ra L
  \Longrightarrow 2u_5, 2u_7 \ra L
  \text{ over } \Z_2; \\
  u_5 \ra L \Longleftrightarrow u_7 \ra L
  \Longrightarrow 2u_1, 2u_3 \ra L
  \text{ over } \Z_2; \\
  u_{+} \ra L \Longrightarrow 5u_{-} \ra L \text{ over } \Z_5;\quad
  u_{-} \ra L \Longrightarrow 5u_{+} \ra L \text{ over } \Z_5.
\end{gather*}
According to the representability of $u_1, u_5$ over $\Z_2$ and
 $u_{+}, u_{-}$ over $\Z_7$, we have eight
cases (See Table \ref{tbl:m=10}). Since the ring $\Fro$ of integers is not a PID, we should also
consider nonfree lattices.

\begin{footnotesize}
\begin{table}[!h]
\centering

\begin{tabular}{p{0.9cm}p{4cm}|p{1.6cm}|p{0.8cm}|p{2.5cm}|p{0.6cm}} \hline%
\multicolumn{2}{c|}{Local Condition}                & \multicolumn{1}{c|}{Volume} & Ess.\#  & \multicolumn{1}{c|}{Reduced Lattice} & Exc.\# \\ \hline%

(1)   & $u_1, u_5 \ra L$ over $\Z_2$                & $\Frv L \subset 5\Fro$    & $1,6 $   & $\qf{1,5}$          & none \\       %
      & $u_{+} \ra L$, $u_{-} \nra L$ over $\Z_5$   &                           &          &                     &      \\ \hline%
(2)   & $u_1, u_5 \ra L$ over $\Z_2$                & $\Frv L \subset 5\Fro$    & $2, 3$   & $[2,1,3]$           & 5    \\       %
      & $u_{+} \nra L$, $u_{-} \ra L$ over $\Z_5$   &                           &          &                     &      \\ \hline%
(3)   & $u_1 \ra L$, $u_5 \nra L$ over $\Z_2$       & $\Frv L \subset 2^3 \Fro$ & $1, 3$   & N.A.                &      \\       %
      & $u_{+}, u_{-} \ra L$ over $\Z_5$            &                           &          &                     &      \\ \hline%
(4)   & $u_1 \ra L$, $u_5 \nra L$ over $\Z_2$       & \multicolumn{4}{c}{N.A.}                                          \\       %
      &  $u_{+} \ra L$, $u_{-} \nra L$ over $\Z_5$  & \multicolumn{4}{c}{}                                              \\ \hline%
(5)   & $u_1 \ra L$, $u_5 \nra L$ over $\Z_2$       & \multicolumn{4}{c}{N.A.}                                          \\       %
      & $u_{+} \nra L$, $u_{-} \ra L$ over $\Z_5$   & \multicolumn{4}{c}{}                                              \\ \hline%
(6)   & $u_1 \nra L$, $u_5 \ra L$ over $\Z_2$       & $\Frv L \subset 2^3 \Fro$ & $2, 5$   & N.A.                &      \\       %
      & $u_{+}, u_{-} \ra L$ over $\Z_5$            &                           &          &                     &      \\ \hline%
(7)   & $u_1 \nra L$, $u_5 \ra L$ over $\Z_2$       & \multicolumn{4}{c}{N.A.}                                          \\       %
      & $u_{+} \ra L$, $u_{-} \nra L$ over $\Z_5$   & \multicolumn{4}{c}{}                                              \\ \hline%
(8)   & $u_1 \nra L$, $u_5 \ra L$ over $\Z_2$       & \multicolumn{4}{c}{N.A.}                                          \\       %
      & $u_{+} \nra L$, $u_{-} \ra L$ over $\Z_5$   & \multicolumn{4}{c}{}                                              \\ \hline%
\end{tabular}
\caption{escalation when $m=10$} %
\label{tbl:m=10}%%
\end{table}
\end{footnotesize}

For the case (6), $L$ contains a unimodular sublattice $\binlattice2{\omega}{\comega}5$ which
splits $L$. Since $6 \ra L$ and $6 \nra \binlattice2{\omega}{\comega}5$, $L$ contains
$\binlattice2{\omega}{\comega}5 \perp \qf{a}$ with $a \leq 6$. But this lattice cannot have the
volume contained in $8 \Fro$. So we have no candidates.

For the case (4), (5), (7) and (8), $u_{+}, u_{-} \ra \lambda_{5^k}(L)$ over $\Z_3$ for some $k$ by
Lemma \ref{Lem:local_universality_of_Watson_transformation}. Since $\lambda_{5^k}(L)$ cannot be
regular by the case (3) or (6), $L$ cannot be regular.\\

%%%%%%%%%%%%%%%%%%%%%%%%%%%%%%%%%%%%%%%%%%%%%%%%%%%%%%%%%%%%%%%%%%%%%%%%%%%%
\Case{II [$m=11$]} Note that
\begin{gather*}
  u_1 \ra L \Longleftrightarrow u_3 \ra L
  \Longleftrightarrow u_5 \ra L \Longleftrightarrow u_7 \ra L
  \text{ over } \Z_2; \\
  u_{+} \ra L \Longrightarrow 11u_{+} \ra L \text{ over }
  \Z_{11};\quad
  u_{-} \ra L \Longrightarrow 11u_{-} \ra L \text{ over } \Z_{11}.
\end{gather*}
According to the representability of $2u_1$ over $\Z_2$ and $u_{+}, u_{-}$ over $\Z_7$, we have
five cases (See Table \ref{tbl:m=11}).

\begin{footnotesize}
\begin{table}[!h]
\centering

\begin{tabular}{p{0.9cm}p{4cm}|p{1.6cm}|p{0.8cm}|p{2.5cm}|p{0.6cm}} \hline%
\multicolumn{2}{c|}{Local Condition}                & \multicolumn{1}{c|}{Volume} & Ess.\#  & \multicolumn{1}{c|}{Reduced Lattice} & Exc.\# \\ \hline%

(1)   & $2u_1 \ra L$ over $\Z_2$                     & $\Frv L \subset 11\Fro$   & $1,14$   & $\qf{1,11}$         & none \\       %
      & $u_{+} \ra L$, $u_{-} \nra L$ over $\Z_{11}$ &                           &          &                     &      \\ \hline%
(2)   & $2u_1 \ra L$ over $\Z_2$                     & $\Frv L \subset 11\Fro$   & $2, 7$   & $[2,\omega,7]$      & 11   \\       %
      & $u_{+} \nra L$, $u_{-} \ra L$ over $\Z_{11}$ &                           &          &                     &      \\ \hline%
(3)   & $2u_1 \nra L$ over $\Z_2$                    & $\Frv L \subset 2^2\Fro$  & $1, 7$   & $\qf{1,4}$          & none \\       %
      & $u_{+}, u_{-} \ra L$ over $\Z_{11}$          &                           &          &                     &      \\ \hline%
(4)   & $2u_1 \nra L$ over $\Z_2$                    & $\Frv L \subset 2^2 \cdot 11\Fro$                                          %
                                                                                 & $1,91$   & $\qf{1,44}$         & none \\       %
      &  $u_{+} \ra L$, $u_{-} \nra L$ over $\Z_{11}$&                           &          & $\qf{1,88}$         & 77   \\ \hline%
(5)   & $2u_1 \nra L$ over $\Z_2$                    & $\Frv L \subset 2^2 \cdot 11\Fro$                                          %
                                                                                 & $7,13 $ & $[7,\omega,13]$      & 8    \\       %
      & $u_{+} \nra L$, $u_{-} \ra L$ over $\Z_{11}$ &                           &         & $[7, 2\omega, 8]$    & 11   \\ \hline%
\end{tabular}
\caption{escalation when $m=11$} %
\label{tbl:m=11}%%
\end{table}
\end{footnotesize}

%%%%%%%%%%%%%%%%%%%%%%%%%%%%%%%%%%%%%%%%%%%%%%%%%%%%%%%%%%%%%%%%%%%%%%%%%%%%%%
\Case{II [$m=15$]} Note that
\begin{gather*}
  u_1 \ra L \Longleftrightarrow u_3 \ra L
  \Longleftrightarrow u_5 \ra L \Longleftrightarrow u_7 \ra L
  \Longrightarrow 2u_1,2u_3,2u_5,2u_7 \ra L
  \text{ over } \Z_2; \\
  u_{+} \ra L \Longrightarrow 3u_{-} \ra L \text{ over } \Z_3;\quad
  u_{-} \ra L \Longrightarrow 3u_{+} \ra L \text{ over } \Z_3; \\
  u_{+} \ra L \Longrightarrow 5u_{-} \ra L \text{ over } \Z_5;\quad
  u_{-} \ra L \Longrightarrow 5u_{+} \ra L \text{ over } \Z_5.
\end{gather*}
According to the representability of $u_{+}$, $u_{-}$ over $\Z_5$ and $u_{+}$, $u_{-}$ over $\Z_7$,
we have eight cases (See Table \ref{tbl:m=15}).

\begin{footnotesize}
%\begin{small}
\begin{table}[h]% m=15
\centering
\begin{tabular}{p{0.9cm}p{4cm}|p{1.6cm}|p{0.8cm}|p{2.5cm}|p{0.6cm}} \hline%
%\begin{tabular}{cc|l|l|l|l} \hline%

\multicolumn{2}{c|}{Local Condition}                & \multicolumn{1}{c|}{Volume} & Ess.\#  & \multicolumn{1}{c|}{Reduced Lattice} & Exc.\# \\ \hline%

(1)     & $u_{+}, u_{-} \ra L$ over $\Z_3$          & $\Frv L \subset 5\Fro$      & 1, 11   & $\qf{1,5}$          & none   \\   %
        & $u_{+} \ra L$, $u_{-} \nra L$ over $\Z_5$ &                             &         & $\qf{1,10}$         & 5      \\   %
        &                                           &                             &         & $\qf{1} \perp 5[2,\omega,5]$      %
                                                                                                                  & 5 \\ \hline %
(2)     & $u_{+}, u_{-} \ra L$ over $\Z_3$          & $\Frv L \subset 5\Fro$      & 2, 3    & $[2,1,3]$           & 5 \\%
        & $u_{+} \nra L$, $u_{-} \ra L$ over $\Z_5$ &                             &         & $[2,\omega,2] \perp \qf{5}$%
                                                                                                                  & none \\ \hline%

(3)     & $u_{+} \ra L$, $u_{-} \nra L$ over $\Z_3$ & $\Frv L \subset 3\Fro$      & 1, 7    & $\qf{1,3}$          & none \\ %
        & $u_{+}, u_{-} \ra L$ over $\Z_5$          &                             &         & $\qf{1,6}$          & 3   \\ \hline%
(4-1)   & $u_{+}, 3u_{+} \ra L$, $u_{-} \nra L$ over $\Z_3$%
                                                    & $\Frv L \subset 3 \cdot 5 \Fro$%
                                                                                  & 1, 21   & $\qf{1,15}$         & 45 \\ %
        & $u_{+} \ra L$, $u_{-} \nra L$ over $\Z_5$ &                             &         &                     &    \\ \hline%

(4-2)   & $u_{+} \ra L$, $u_{-}, 3u_{+} \nra L$ over $\Z_3$%
                                                    & $\Frv L \subset 3^2 \cdot 5 \Fro$%
                                                                                  & 1, 91   & $\qf{1,45}$         & 17 \\ %
        & $u_{+} \ra L$, $u_{-} \nra L$ over $\Z_5$ &                             &         & $\qf{1,90}$         & 145 \\ \hline%

(5-1-1) & $u_{+}, 3u_{+} \ra L$, $u_{-} \nra L$ over $\Z_3$%
                                                    & $\Frv L \subset 3 \cdot 5\Fro$%
                                                                                  & 3, 7    & $[3, 1+\omega, 7]$  & 15 \\ %
        & $u_{+} \nra L$, $u_{-}, 5u_{-} \ra L$ over $\Z_5$%
                                                    &                             &         &                     &      \\ \hline%
(5-1-2) & $u_{+}, 3u_{+} \ra L$, $u_{-} \nra L$ over $\Z_3$%
                                                    & $\Frv L \subset 3 \cdot 5^2\Fro$%
                                                                                  & 3, 7    & N.A.                & \\ %
        & $u_{+}, 5u_{-} \nra L$, $u_{-} \ra L$ over $\Z_5$%
                                                    &                             &         &                     &      \\ \hline%
(5-2-1) & $u_{+} \ra L$, $u_{-}, 3u_{+} \nra L$ over $\Z_3$
                                                    & $\Frv L \subset 3^2 \cdot 5\Fro$
                                                                                  & 7, 11   & $[7, 2, 7]$         & 13 \\ %
        & $u_{+} \nra L$, $u_{-}, 5u_{-} \ra L$ over $\Z_5$
                                                    &                             &         &                     &      \\ \hline%
(5-2-2) & $u_{+} \ra L$, $u_{-}, 3u_{+} \nra L$ over $\Z_3$
                                                    & $\Frv L \subset 3^2 \cdot 5^2\Fro$
                                                                                  & 7, 13   & N.A.                & \\ %
        & $u_{+}, 5u_{-} \nra L$, $u_{-} \ra L$ over $\Z_5$
                                                    &                             &         &                     &      \\ \hline%
(6-1)   & $u_{+} \nra L$, $u_{-}, 3u_{-} \ra L$ over $\Z_3$
                                                    & $\Frv L \subset 3\Fro$      & 2, 6    & $[2,\omega,2] \perp \qf{6}$
                                                                                                                  & 15 \\ %
        & $u_{+}, u_{-} \ra L$ over $\Z_5$          &                             &         & $[2,\omega,2] \perp 3[2,\omega,2]$
                                                                                                                  & none \\ %
        &                                           &                             &         & $\cong [8,{-1+4\omega},8]$
                                                                                                                  &  \\ \hline%
(6-2)   & $u_{+}, 3u_{-} \nra L$, $u_{-} \ra L$ over $\Z_3$
                                                    & $\Frv L \subset 3^2\Fro$    & 2, 3    & $[2,\omega,2] \perp \qf{9}$
                                                                                                                  & none \\ %
        & $u_{+}, u_{-} \ra L$ over $\Z_5$          &                             &         &                     &  \\ \hline%
(7-1-1) & $u_{+} \nra L$, $u_{-}, 3u_{-} \ra L$ over $\Z_3$
                                                    & $\Frv L \subset 3 \cdot 5\Fro$
                                                                                  & 5, 6    & $[5, -1+2\omega, 6]$& 9 \\ %
        & $u_{+}, 5u_{+} \ra L$, $u_{-} \nra L$ over $\Z_5$
                                                    &                             &         & $\qf{5} \perp 3[2,\omega,2]$
                                                                                                                  & 21   \\ \hline%
(7-1-2) & $u_{+} \nra L$, $u_{-}, 3u_{-} \ra L$ over $\Z_3$
                                                    & $\Frv L \subset 3 \cdot 5^2\Fro$
                                                                                  & 6, 11   & N.A.                &  \\ %
        & $u_{+} \ra L$, $u_{-}, 5u_{+} \nra L$ over $\Z_5$
                                                    &                             &         &                     &      \\ \hline%
(7-2-1) & $u_{+}, 3u_{-} \nra L$, $u_{-} \ra L$ over $\Z_3$
                                                    & $\Frv L \subset 3^2 \cdot 5\Fro$
                                                                                  & 5, 11   & $[5, 2+\omega, 11]$ & 9 \\ %
        & $u_{+}, 5u_{+} \ra L$, $u_{-} \nra L$ over $\Z_5$
                                                    &                             &         &                     &      \\ \hline%
(7-2-2) & $u_{+}, 3u_{-} \nra L$, $u_{-} \ra L$ over $\Z_3$
                                                    & $\Frv L \subset 3^2 \cdot 5^2\Fro$
                                                                                  & 11, 14  & N.A.                 & \\ %
        & $u_{+} \ra L$, $u_{-}, 5u_{+} \nra L$ over $\Z_5$
                                                    &                             &         &                      &      \\ \hline%
(8-1)   & $u_{+} \nra L$, $u_{-}, 3u_{-} \ra L$ over $\Z_3$
                                                    & $\Frv L \subset 3 \cdot 5\Fro$
                                                                                  & 2, 3    & N.A.                 & \\ %
        & $u_{+} \nra L$, $u_{-} \ra L$ over $\Z_5$ &                             &         &                      &      \\ \hline%
(8-2)   & $u_{+}, 3u_{-} \nra L$, $u_{-} \ra L$ over $\Z_3$
                                                    & $\Frv L \subset 3^2 \cdot 5\Fro$
                                                                                  & 2, 3    & $[2,\omega,2] \perp \qf{45}$
                                                                                                                   & 35 \\ %
        & $u_{+} \nra L$, $u_{-} \ra L$ over $\Z_5$ &                             &         &                      & \\ \hline%
\end{tabular}
\caption{escalation when $m=15$} %
\label{tbl:m=15}%%
\end{table}
\end{footnotesize}
%\end{small}

For the case (1), $L \cong \qf{1,5}$ or $L$ contains $\qf{1, 10}$. Since $5$ is an exceptional
number of $\qf{1,10}$, $L$ contains a lattice $\qf{1} \perp
\binlattice{10}{\alpha}{\conj\alpha}{5\beta}$ with $50\beta - \alpha{\conj\alpha} =0$ and $\alpha$,
$\beta \in \Fro$. Thus $L \cong \qf{1} \perp 5\binlattice{2}{\omega}{\comega}{2}$.

For the case (2), after second escalation, $L \cong \binlattice2113$ or $L$ contains a unary
unimodular lattice $\binlattice{2}{\omega}{\comega}{2}$ which splits $L$. Since $7 \ra L$ and $7
\nra \binlattice{2}{-1+\omega}{-1+\comega}{3}$, we can get candidate
$\binlattice{2}{\omega}{\comega}{2} \perp \qf{5}$ by comparing volume of $L$. Similarly, we can get
results for the cases (6-2), (8-1) and (8-2).

For the case (3), $L \cong \qf{1,3}$ or $L$ contains $\qf{1,6}$. When $L$ contains $\qf{1,6}$,
since $3 \ra \gen\qf{1,6}$, $3 \ra L$ and hence we have a candidate $\qf{1,3}$ by the volume
condition. Similarly, for the case (4-2), $L$ contains $\qf{1,90}$ or $L \cong \qf{1, 45}$. If $L$
contains $\qf{1,90}$, then since $45 \ra \gen\qf{1,90}$ and $45 \nra \qf{1,90}$, $L \cong
\qf{1,45}$ by the volume condition.

For the case (6-1), after second escalation, $L$ contains $\qf{2,6}$,
$\binlattice{2}{1+\omega}{1+\comega}{3}$ or $\binlattice2112$. If $L$ contains  $\qf{2,6}$, from
the condition $3 \ra L$, $L$ contains a lattice
$\terlattice20\beta06\gamma{\conj\beta}{\conj\gamma}3$ with $3 \mid (6-\beta{\conj\beta})$, $3 \mid
(18-\gamma{\conj\gamma})$ and its determinant $36-6\beta{\conj\beta}-2\gamma{\conj\gamma} = 0$. So
$L$ contains a lattice $\binlattice2{1+\omega}{1+\comega}3 \perp \qf{6} \cong
\binlattice2\omega\comega2 \perp \qf{6}$. Since $15\ra L$ and $15 \nra \binlattice2\omega\comega2
\perp \qf{6}$, we have a candidate $L \cong \binlattice2\omega\comega2 \perp 3
\binlattice2\omega\comega2$, via similar way. Note that $L$ is isometric to the binary \emph{free}
lattice $\binlattice8{-3+4\omega}{-3+4\comega}8$. If $L$ contains
$\binlattice{2}{1+\omega}{1+\comega}{3}$ or $\binlattice2112$, then we know that there is no
candidates via similar way.

%%%%%%%%%%%%%%%%%%%%%%%%%%%%%%%%%%%%%%%%%%%%%%%%%%%%%%%%%%%%%%%%%%%%%%%%%%%%%%
\Case{II [$m=19$]} Note that
\begin{gather*}
  u_1 \ra L \Longleftrightarrow u_3 \ra L
  \Longleftrightarrow u_5 \ra L \Longleftrightarrow u_7 \ra L
  \text{ over } \Z_2; \\
  u_{+} \ra L \Longrightarrow 19u_{+} \ra L \text{ over }
  \Z_{19};\quad
  u_{-} \ra L \Longrightarrow 19u_{-} \ra L \text{ over } \Z_{19}.
\end{gather*}
According to the representability of $2u_1$ over $\Z_2$ and $u_{+}, u_{-}$ over $\Z_{19}$, we have
two cases (See Table \ref{tbl:m=19}).

\begin{footnotesize}
\begin{table}[h]% m=19
\centering

\begin{tabular}{p{0.9cm}p{4cm}|p{1.6cm}|p{0.8cm}|p{2.5cm}|p{0.6cm}} \hline%
\multicolumn{2}{c|}{Local Condition}                & \multicolumn{1}{c|}{Volume} & Ess.\#  & \multicolumn{1}{c|}{Reduced Lattice} & Exc.\# \\ \hline%

(1-1)   & $2u_1 \ra L$ over $\Z_2$                     & $\Frv L \subset 19 \Fro$    & 1, 6    & N.A.                &        \\%
        & $u_{+} \ra L$, $u_{-} \nra L$ over $\Z_{19}$ &                             &         &                     & \\ \hline%

(1-2)   & $2u_1 \nra L$ over $\Z_2$                    & $\Frv L \subset 2^2 \cdot 19\Fro$ %
                                                                                     & 1, 39   & N.A.                &        \\%
        & $u_{+} \ra L$, $u_{-} \nra L$ over $\Z_{19}$ &                             &         &                     & \\ \hline%

(2-1)   & $2u_1 \ra L$ over $\Z_2$                     & $\Frv L \subset 19 \Fro$    & 2, 3    & N.A.                & \\
        & $u_{+} \nra L$, $u_{-} \ra L$ over $\Z_{19}$ &                             &         &                     & \\ \hline%

(2-2)   & $2u_1 \nra L$ over $\Z_2$                    & $\Frv L \subset 2^2 \cdot 19\Fro$ %
                                                                                     & 3, 13   & N.A.                & \\
        & $u_{+} \nra L$, $u_{-} \ra L$ over $\Z_{19}$ &                             &         &                     & \\ \hline%
\end{tabular}
\caption{escalation when $m=19$} %
\label{tbl:m=19}%%
\end{table}
\end{footnotesize}

%%%%%%%%%%%%%%%%%%%%%%%%%%%%%%%%%%%%%%%%%%%%%%%%%%%%%%%%%%%%%%%%%%%%%%%%%%%%%%
\Case{II [$m=23$]} Note that
\begin{gather*}
  u_1 \ra L \Longleftrightarrow u_3 \ra L
  \Longleftrightarrow u_5 \ra L \Longleftrightarrow u_7 \ra L
  \Longrightarrow 2u_1,2u_3,2u_5,2u_7 \ra L
  \text{ over } \Z_2; \\
  u_{+} \ra L \Longrightarrow 23u_{+} \ra L \text{ over }
  \Z_{23};\quad
  u_{-} \ra L \Longrightarrow 23u_{-} \ra L \text{ over } \Z_{23}.
\end{gather*}
According to the representability of  $u_{+}, u_{-}$ over $\Z_{23}$, we have two cases (See Table
\ref{tbl:m=23}).

\begin{footnotesize}
\begin{table}[h]% m=23
\centering

\begin{tabular}{p{0.9cm}p{4cm}|p{1.6cm}|p{0.8cm}|p{2.5cm}|p{0.6cm}} \hline%
\multicolumn{2}{c|}{Local Condition}                & \multicolumn{1}{c|}{Volume} & Ess.\#  & \multicolumn{1}{c|}{Reduced Lattice} & Exc.\# \\ \hline%

(1)     & $u_{+} \ra L$, $u_{-} \nra L$ over $\Z_{23}$ & $\Frv L \subset 23\Fro$     & 1, 2    & N.A.                &        \\ \hline%
(2)     & $u_{+} \nra L$, $u_{-} \ra L$ over $\Z_{23}$ & $\Frv L \subset 23\Fro$     & 5, 7    & $[5,2+\omega,7]$    & 10     \\ \hline%
\end{tabular}
\caption{escalation when $m=23$} %
\label{tbl:m=23}%%
\end{table}

\end{footnotesize}

%%%%%%%%%%%%%%%%%%%%%%%%%%%%%%%%%%%%%%%%%%%%%%%%%%%%%%%%%%%%%%%%%%%%%%%%%%%%%%
\Case{II [$m=31$]} Note that
\begin{gather*}
  u_1 \ra L \Longleftrightarrow u_3 \ra L
  \Longleftrightarrow u_5 \ra L \Longleftrightarrow u_7 \ra L
  \Longrightarrow 2u_1,2u_3,2u_5,2u_7 \ra L
  \text{ over } \Z_2; \\
  u_{+} \ra L \Longrightarrow 31u_{+} \ra L \text{ over } \Z_{31};\quad
  u_{-} \ra L \Longrightarrow 31u_{-} \ra L \text{ over } \Z_{31}.
\end{gather*}
According to the representability of $u_{+}, u_{-}$ over $\Z_{31}$, we have two cases (See Table
\ref{tbl:m=31}).

\begin{footnotesize}
\begin{table}[h] % m=31
\centering

\begin{tabular}{p{0.9cm}p{4cm}|p{1.6cm}|p{0.8cm}|p{2.5cm}|p{0.6cm}} \hline%
\multicolumn{2}{c|}{Local Condition}                & \multicolumn{1}{c|}{Volume} & Ess.\#  & \multicolumn{1}{c|}{Reduced Lattice} & Exc.\# \\ \hline%

(1)     & $u_{+} \ra L$, $u_{-} \nra L$ over $\Z_{31}$ & $\Frv L \subset 31\Fro$     & 1, 2    & N.A.                &        \\ \hline%
(2)     & $u_{+} \nra L$, $u_{-} \ra L$ over $\Z_{31}$ & $\Frv L \subset 31\Fro$     & 3, 6    & N.A.                &        \\ \hline%
\end{tabular}
\caption{escalation when $m=31$} %
\label{tbl:m=31}%%
\end{table}
\end{footnotesize}

\section{Complete list of binary regular Hermitian lattices}\label{List}

In this section, we will prove the regularity of each candidate. If its class number is one, then
it is trivially regular. We know that the class numbers of following lattices are one and hence
they are regular. Some diagonal lattices are checked by \cite{gO-71}, and the other nondiagonal
lattices are checked by authors via comparing discriminants and local properties.\\

\begin{center}
\begin{footnotesize}
\begin{tabular}{c|l} \hline%
field    & class number one lattices                   \\ \hline%

$\Q{-1}$ & $\qf{1,4}$,                                         %
           $\binlattice{2}{-1+\omega}{-1+\comega}{3}$,         %
           $\binlattice{3}{-1+\omega}{-1+\comega}{6}$,         %
           $\binlattice3113$                          \\ \hline%

$\Q{-3}$ & $\qf{1,3}$, $\qf{1,4}$, $\qf{1,6}$, $\qf{2,3}$,     %
           $\binlattice2112$,                                  %
           $\binlattice2115$,                                  %
           $\binlattice{3}{1+\omega}{1+\comega}{5}$,           %
           $\binlattice5228$                          \\ \hline%

$\Q{-7}$ & $\binlattice{3}{\omega}{\comega}{3}$       \\ \hline%
\end{tabular}
\end{footnotesize}
\end{center}

Now, we prove all the surviving candidates are actually regular. Most of proofs are using a ternary
regular sublattice whose class number one. Whereas in the the proof of Case II [$m=7$] (2), we give
an efficient bound for the numbers represented by a specific quaternary quadratic form. This new
arithmetic method uses ternary quadratic forms which are not regular.\\

\Case{I [$m=1$]}

(1) $L = \qf{1,8}$ is regular over $\Q{-1}$.
\begin{proof}
Note that
    \[
    H(\gen L) = \{ n \in \N_0 \mid n \equiv 0, 1 \pmod{4} \text{ or } n \equiv 2 \pmod{8} \}.
    \]
Since $\qf{2,8} = 2\qf{1,4}$ is a sublattice of $L$, if $n \ra \qf{1,4}$, then $2n \ra \qf{2,8}$
and hence $2n \ra L$. From the Case I [$m=1$] (1), we know that $\qf{1,4}$ is regular, which
represents all positive integers $n$ such that $n \equiv 0, 1, 2 \pmod{4}$. Hence $L$ represents
all positive integers $n$ such that $n \equiv 0 \pmod{4}$ or $n \equiv 2 \pmod{8}$. On the other
hand, the associated quadratic lattice of $L$ is
    \[
    x\conj{x} + 8y\conj{y} = x_1^2 + x_2^2 + 8y_1^2 + 8y_2^2.
    \]
Since it has a regular sublattice $x_1^2 + x_2^2 + 8y_1^2$ \cite{wJ-iK-aS-97}, which represents all
positive integers $n \equiv 1 \pmod{4}$ , $L$ represents all positive integers $n \equiv 1
\pmod{4}$. Therefore $L$ is regular.
\end{proof}

(2) $L = \qf{1,16}$ is regular over $\Q{-1}$.
\begin{proof}
Note that
    \[
    H(\gen L) = \{ n \in \N_0 \mid n \equiv 1 \pmod{4}, n \equiv 0, 2 \pmod{8} \text{ or } n \equiv 4 \pmod{16} \}.
    \]
Since $\qf{2,16} = 2\qf{1,8}$ is a sublattice of $L$, if $n \ra \qf{1,8}$, then $2n \ra \qf{2,16}$
and hence $2n \ra L$. From the Case I [$m=1$] (2), we know that $\qf{1,8}$ is regular, which
represents all positive integers $n$ such that $n \equiv 0, 1 \pmod{4}$ or $n \equiv 2 \pmod{8}$.
Hence $L$ represents all positive integers $n$ such that $n \equiv 0, 2 \pmod{4}$ or $n \equiv 4
\pmod{16}$. On the other hand, the associated quadratic lattice of $L$ is
    \[
    x\conj{x} + 16y\conj{y} = x_1^2 + x_2^2 + 16y_1^2 + 16y_2^2.
    \]
Since it has a regular sublattice $x_1^2 + x_2^2 + 16y_1^2$ \cite{wJ-iK-aS-97}, which represents
all positive integers $n \equiv 1 \pmod{4}$, $L$ represents all positive integers $n \equiv 1
\pmod{4}$. Therefore $L$ is regular.
\end{proof}

\Case{I [$m=2$]}

(1) $L = \qf{1,8}$ is regular over $\Q{-2}$.
\begin{proof}
Note that
    \[
    H(\gen L) = \{ n \in \N_0 \mid n \equiv 1,3 \pmod{8} \text{ or } n \equiv 0 \pmod{2} \}.
    \]
Since $\qf{2,8} = 2\qf{1,4}$ is a sublattice of $L$ and $\qf{1,4}$ is universal
\cite{agE-aK-97(1)}, $n \ra L$ for all $n \equiv 0 \pmod{2}$. On the other hand, the associated
quadratic form of $L$ is
    \[
    x\conj{x} + 8y\conj{y} = x_1^2 + 2 x_2^2 + 8 y_1^2 + 16 y_2^2.
    \]
Since it has a regular sublattice $x_1^2 + 2 x_2^2 + 8 y_1^2$ \cite{wJ-iK-aS-97}, which represents
all positive integers $n \equiv 1, 3 \pmod{8}$, these $n$ are all represented by $L$. Therefore $L$
is regular.
\end{proof}

(2) $L = \qf{1,16}$ is regular over $\Q{-2}$.
\begin{proof}
Note that
    \[
    H(\gen L) = \{ n \in \N_0 \mid n \equiv 0 \pmod{4}, n \equiv 1,3 \pmod{8} \text{ or } n \equiv 2, 6 \pmod{16} \}.
    \]
Since $\qf{4,16} = 4\qf{1,4}$ is a sublattice of $L$ and $\qf{1, 4}$ is universal
\cite{agE-aK-97(1)}, $n \ra L$ for all $n \equiv 0 \pmod{4}$. On the other hand, the associated
quadratic form of $L$ is
    \[
    x\conj{x} + 8y\conj{y} = x_1^2 + 2 x_2^2 + 16 y_1^2 + 32 y_2^2.
    \]
Since it has a regular sublattice $x_1^2 + 2 x_2^2 + 16 y_1^2$ \cite{wJ-iK-aS-97}, which represents
all positive integers $n \equiv 1, 3 \pmod{8}$ or $n \equiv 2, 6 \pmod{16}$, these $n$ are all
represented by $L$. Therefore $L$ is regular.
\end{proof}

(3) $L = \qf{1,32}$ is regular over $\Q{-2}$.
\begin{proof}
Note that
    \[
    H(\gen L) = \{ n \in \N_0 \mid n \equiv 0, 1, 3 \pmod{8},
    n \equiv 2, 6 \pmod{16} \text{ or } n \equiv 4, 12 \pmod{32} \}.
    \]
Since $\qf{8,32} = 8\qf{1,4}$ is a sublattice of $L$ and $\qf{1,4}$ is universal
\cite{agE-aK-97(1)}, $n \ra L$ for all $n \equiv 0 \pmod{8}$. On the other hand, the associated
quadratic form of $L$ is
    \[
    x\conj{x} + 8y\conj{y} = x_1^2 + 2 x_2^2 + 32 y_1^2 + 64 y_2^2.
    \]
Since it has a regular sublattice $x_1^2 + 2 x_2^2 + 32 y_1^2$ \cite{wJ-iK-aS-97}, which represents
all positive integers $n \equiv 1, 3 \pmod{8}$, $n \equiv 2, 6 \pmod{16}$ or $n \equiv 4, 12
\pmod{32}$ , these $n$ are all represented by $L$. Therefore $L$ is regular.
\end{proof}

(4) $L = \binlattice2{\omega}{\comega}5$ is regular over $\Q{-2}$.

\begin{proof}
Note that
    \[
    H(\gen L) = \{ n \in \N_0 \mid n \equiv 0 \pmod{2}, \text{ or } n \equiv 5, 7 \pmod{8} \}.
    \]
Consider a universal lattice $\qf{1,4}$ with basis $\{v_1, v_2\}$  \cite{agE-aK-97(1)}. We know
that $L$ is a sublattice $\qf{1,4}$ with basis $\{ \omega v_1, v_1 + v_2 \}$. If $n \equiv 0
\pmod{2}$, then $n \ra L$, since $L$ contains a sublattice $\qf{2,8} = 2\qf{1,4}$ and $\qf{1,4}$ is
universal. If $n \equiv 5, 7 \pmod{8}$, then $n \ra \qf{1,4}$ by universality of $\qf{1,4}$ and
hence
    \[
    n = H(x v_1 + y v_2) = x\conj{x} + 4y\conj{y} = x_1^2 + 2 x_2^2 + 4y_1^2 + 8y_2^2
    \]
has a solution for some $x = x_1 + \omega x_2$, $y = y_1 + \omega y_2 \in \Fro$.  Since $n \equiv
5, 7 \pmod{8}$, $x_1 \equiv y_1 \equiv 1 \pmod{2}$. So $x_1 - y_1$ is even and multiple of
$\omega$. We deduce that
    \[
    xv_1 + yv_2 = (x_1 - y_1) v_1 + \omega(x_2 - y_2) v_1 + (y_1 + \omega y_2) (v_1 + v_2) \in L
    \]
and hence $n \ra L$. Therefore $L$ is regular.
\end{proof}

\Case{II [$m=3$]}

(1) $L = \qf{1,9}$ is regular over $\Q{-3}$.
\begin{proof}
Note that
    \[
    H(\gen L) = \{ n \in \N_0 \mid n  \equiv 1 \pmod{3} \text{ or } n \equiv 0, 3 \pmod{9} \}.
    \]
Since $\qf{3,9} = 3\qf{1,3}$ is a sublattice of $L$, if $n \ra \qf{1,3}$, then $3n \ra \qf{3,9}$
and hence $3n \ra L$. From Case II [$m=3$] (1), we know that $\qf{1,3}$ is regular, which
represents all positive integers $n$ such that $n \equiv 0, 1 \pmod{3}$. Hence $L$ represents all
positive integers $n$ such that $n \equiv 0, 3 \pmod{9}$. On the other hand, the associated
quadratic lattice of $L$ is
    \[
    x\conj{x} + 9y\conj{y} = x_1^2 + x_1 x_2 + x_2^2 + 9 y_1^2 + 9 y_1 y_2 + 9 y_2^2.
    \]
Since it has a regular sublattice $x_1^2 + x_1 x_2 + x_2^2 + 9 y_1^2$ \cite{wJ-iK-aS-97}, which
represents all positive integers $n \equiv 1 \pmod{3}$, these $n$ are all represented by $L$.
Therefore $L$ is regular.
\end{proof}

(2) $L = \qf{1,12}$ is regular over $\Q{-3}$.
\begin{proof}
Note that
    \[
    H(\gen L) = \{ n \in \N_0 \mid n \equiv 0, 1, 3, 4, 7, 9  \pmod{12} \}.
    \]
Consider regular lattices $\qf{1,3}$ and $\qf{1,4}$ (see Case II [$m=3$] (1), (2)). Since
$\qf{1,3}$ represents all positive integers $n$ such that $n \equiv 0, 1 \pmod{3}$ and $L$ contains
a sublattice $\qf{4,12} = 4\qf{1,3}$, $L$ represents all positive integers $n$ such that $n \equiv
0, 4 \pmod{12}$. Since $\qf{1,4}$ represents all positive integers $n$ such that $n \equiv 0, 1, 3
\pmod{4}$ and $L$ contains a sublattice $\qf{3,12} = 3\qf{1,4}$, $L$ represents all positive
integers $n$ such that $n \equiv 0, 1, 7 \pmod{12}$. On the other hand, the associated quadratic
lattice of $L$ is
    \[
    x\conj{x} + 12y\conj{y} = x_1^2 + x_1 x_2 + x_2^2 + 12 y_1^2 + 12 y_1 y_2 + 12 y_2^2.
    \]
Since it contains a regular lattice $x_1^2 + x_1 x_2 + x_2^2 + 12 y_1^2$ \cite{wJ-iK-aS-97}, which
represents all positive integers $n \equiv 0, 1 \pmod{3}$, these $n$ are all represented by $L$.
Hence $L$ is regular.
\end{proof}

(3) $L = \qf{1,36}$ is regular over $\Q{-3}$.
\begin{proof}
Note that
    \[
    H(\gen L) = \{ n \in \Z \mid n \equiv 0, 1,3\pmod{4} \text{ and } n
    \equiv 0, 1 \pmod{3} \}.
    \]
Consider regular lattices $\qf{1,9}$, $\qf{1,12}$ (see Case II [$m=3$] (5), (6)). Since $L$
contains sublattices $\qf{4,36}=4\qf{1,9}$ and $\qf{3,36}=3\qf{1,12}$, $L$ represents all positive
integers $n$ such that $n \equiv 0 \pmod 4$ and $n \equiv 0,1  \pmod 3$, or $n \equiv 0 \pmod 3$
and $n \equiv 1 \pmod 2$. So it suffices to show that $L$ represents all positive integers $n$ such
that $n$ is odd and $n \equiv 1 \pmod 3$. The associated quadratic lattice of $L$ is
    \[
    x\conj{x} + 36y\conj{y} = x_1^2 + x_1 x_2 + x_2^2 + 36 y_1^2 + 36 y_1 y_2 + 36 y_2^2,
    \]
and it contains a sublattice isometric to $\qf{1, 3, 36, 108}_\Z = \qf{1}_\Z \perp 3\qf{1, 12,
36}_{\Z}$. Since $\qf{1,12,36}_\Z$ is regular \cite{wJ-iK-aS-97}, $3 \qf{1,12,36}_\Z =
\qf{3,36,108}_\Z$ represents all positive integer $n \equiv 3, 12 \pmod{36}$. If $n \equiv 1
\pmod{12}$ and $n \ge 49$, then  $n - a^2 \equiv 12 \pmod{36}$ for $a=1,5,7$ and hence $L$
represents $n$. If $n \equiv 7 \pmod{12}$ and $n \ge 64$, then $n - a^2 \equiv 3 \pmod{36}$ for
$a=2,4,8$ and hence $L$ represents $n$. It is an easy work to check $n \ra L$ for $1, 7, 13, 19,
25, 31, 37, 43, 49, 55$. Therefore $L$ is regular.
\end{proof}

\Case{II [$m=5$]}

(1) $L = \qf{1,8}$ is regular over $\Q{-5}$.
\begin{proof}
Note that
    \[
    H(\gen L) = \{ n \in \N_0 \mid n \equiv 0, 1 \pmod{4} \text{ or } n \equiv 6 \pmod{8} \}.
    \]
Since $\qf{4,8} = 4\qf{1,2}$ is a sublattice of $L$ and $\qf{1,2}$ is universal \cite{Iwabuchi}, $n
\ra L$ for all $n \equiv 0 \pmod{4}$. On the other hand, the associated quadratic lattice of $L$ is
\[
    x\conj{x} + 8y\conj{y} = x_1^2 + 5 x_2^2 + 8 y_1^2 + 40 y_2^2.
\]
Since it has a regular sublattice $x_1^2 + 5 x_2^2 + 8 y_1^2$ \cite{wJ-iK-aS-97}, which represents
all positive integers $n \equiv 1 \pmod{4}$ or $n \equiv 6 \pmod{8}$, these all $n$ are represented
by $L$. Therefore $L$ is regular.
\end{proof}

(2) $L = \qf{1,10}$ is regular over $\Q{-5}$.
\begin{proof}
Note that
    \[
    H(\gen L) = \{ n \in \N_0 \mid n \equiv 0, 1, 4 \pmod{5} \}.
    \]
Since  $\qf{5,10} = 5\qf{1,2}$ is a sublattice of $L$ and $\qf{1,2}$ is universal \cite{Iwabuchi},
$n \ra L$ for all $n \equiv 0 \pmod{5}$. On the other hand, the associated quadratic lattice of $L$
is
\[
    x\conj{x} + 10y\conj{y} = x_1^2 + 5 x_2^2 + 10 y_1^2 + 50 y_2^2.
\]
Since it has a regular sublattice $x_1^2 + 5 x_2^2 + 10 y_1^2$ \cite{wJ-iK-aS-97}, which represents
all positive integers $n \equiv 1, 4 \pmod{5}$, $n \ra L$ for all positive integers $n \equiv 1, 4
\pmod{5}$. Therefore $L$ is regular.
\end{proof}

(3) $L = \qf{1} \perp 5\binlattice2{-1+\omega}{-1+\comega}{3}$ is regular over $\Q{-5}$.
\begin{proof}
Note that
    \[
    H(\gen L) = \{ n \in \N_0 \mid n \equiv 0, 1, 4 \pmod{5} \}.
    \]
Since $L$ is a sublattice of a regular lattice $\qf{1,10}$ (see Case II [$m=5$] (2)), $L$ is
regular.
\end{proof}

(4) $L = \qf{1,40}$ is regular over $\Q{-5}$.
\begin{proof}
Note that
    \[
    H(\gen L) = \{ n \in \N_0 \mid n \equiv 0, 1, 4, 5, 6 \pmod{8}\}
                \cap  \{ n \in \N_0 \mid n \equiv 0,1,4 \pmod{5} \}.
    \]
Consider regular lattices $\qf{1,8}$ and $\qf{1,10}$ (see Case II [$m=5$] (1), (2)). Since
$5\qf{1,8} = \qf{5,40}$ is a sublattice of $L$, $L$ represents all positive integers $n \equiv 0,
1, 4, 5, 6 \pmod{8}$ and $n \equiv 0 \pmod{5}$. Since $4\qf{1,10} = \qf{4,40}$ is a sublattices of
$L$, $L$ represents all positive integers $n \equiv 0, 1, 4 \pmod{5}$ and $n \equiv 0 \pmod{4}$. On
the other hand, the associated quadratic lattice of $L$ is
    \[
    x\conj{x} + 40y\conj{y} = x_1^2 + 5 x_2^2 + 40 y_1^2 + 160 y_2^2.
    \]
Since it has a regular sublattice $x_1^2 + 5 x_2^2 + 40 y_1^2$ \cite{wJ-iK-aS-97}, which represents
all positive integers $n \equiv 1, 5, 6 \pmod{8}$ and $n \equiv 1,4 \pmod{5}$, these $n$ are
represented by $L$. Therefore $L$ is regular.
\end{proof}

(5) $L = \binlattice2{-1+\omega}{-1+\comega}3 \perp \qf{4}$ is regular over $\Q{-5}$.
\begin{proof}
Note that
    \[
    H(\gen L) = \{ n \in \N_0 \mid n \equiv 0 \pmod{2} \text{ or } n \equiv 3 \pmod{4} \}.
    \]
The associated quadratic lattice of $L$ is
    \[
    2 x_1^2 + 2x_1x_2 + 3x_2^2 + 4y_1^2 + 20y_2^2.
    \]
Note that it contains regular sublattices $\ell_1 = 2 x_1^2 + 2x_1x_2 + 3x_2^2 + 4y_1^2$ and
$\ell_2 = 2 x_1^2 + 2x_1x_2 + 3x_2^2 + 20y_2^2$ \cite{wJ-iK-aS-97}. The lattice $\ell_1$ represents
all positive integers $n$ if $n \equiv 0, 2,3\pmod{4}$ and $n \ne 5^d u_{+}$. The lattice $\ell_2$
represents all positive integers $n$ if $n \equiv 0, 2, 3\pmod{4}$ and $n \ne 5^e u_{+}$. Hence $n
\ra L$ if $n \equiv 0 \pmod{2}$ or $n \equiv 3\pmod{4}$.  Therefore $L$ is regular.
\end{proof}

(6) $L = \binlattice2{-1+\omega}{-1+\comega}3 \perp \qf{5}$ is regular over $\Q{-5}$.
\begin{proof}
Note that
    \[
    H(\gen L) = \{ n \in \N_0 \mid n \equiv 0, 2, 3 \pmod{5} \}.
    \]
Since $\qf{1}\perp\binlattice2{-1+\omega}{-1+\comega}3$ is universal \cite{Iwabuchi} and
$5\left(\qf{1}\perp\binlattice2{-1+\omega}{-1+\comega}3\right)$ is a sublattice of $L$, $n \ra L$
for all $n \equiv 0 \pmod{5}$. On the other hand, the associated quadratic lattice of $L$ is
    \[
    2 x_1^2 + 2x_1x_2 + 3x_2^2 + 5y_1^2 + 25y_2^2.
    \]
Since it has a regular sublattice $2 x_1^2 + 2x_1x_2 + 3x_2^2 + 5y_1^2$ \cite{wJ-iK-aS-97}, which
represents all positive integers $n \equiv 2,3 \pmod{5}$, $n \ra L$ for all positive integers $n
\equiv 2,3 \pmod{5}$. Therefore $L$ is regular.
\end{proof}

(7) $L = \binlattice2{-1+\omega}{-1+\comega}3 \perp \qf{20}$ is regular over $\Q{-5}$.
\begin{proof}
Note that
    \[
    H(\gen L) = \{ n \in \N_0 \mid n \equiv 0, 2, 3\pmod{4} \}
                \cap \{ n \in \N_0 \mid n \equiv 0, 2,3\pmod{5}  \}.
    \]
The associated quadratic lattice $\widetilde{L}$ of $L$ is
    \[
    2 x_1^2 + 2x_1x_2 + 3x_2^2 + 20y_1^2 + 100y_2^2.
    \]
We mentioned that a regular lattice $2 x_1^2 + 2x_1x_2 + 3x_2^2 + 4y^2$ represents $n$ if $n \equiv
0, 2, 3\pmod{4}$ (see Case II [$m=5$] (5)). Since $5 \left( 2 x_1^2 + 2x_1x_2 + 3x_2^2 + 4y^2
\right)$ is a sublattice of $\widetilde{L}$, $5n \ra L$ if $5n \equiv 3\pmod{4}$ or $5n\equiv 0
\pmod{2}$. Now consider another sublattice $2 x_1^2 + 2x_1x_2 + 3x_2^2 + 20y_1^2 $ (see Case II
[$m=5$] (5)). It represents $n$, if $n \equiv 0, 2, 3\pmod{4}$ and $n \equiv 2,3\pmod{5}$. Hence
the lattice $L$ is regular.
\end{proof}

\Case{II [$m=6$]}

(1) $L = \qf{1,3}$ is regular over $\Q{-6}$.
\begin{proof}
Note that
    \[
    H(\gen L) = \{ n \in \N_0 \mid n \equiv 0, 1 \pmod{3} \}.
    \]
The associated quadratic lattice of $L$ is
\[
    x\conj{x} + 3y\conj{y} = x_1^2 + 6x_2^2 + 3y_1^2 + 18y_2^2.
\]
Since it has a regular sublattice $x_1^2 + 3y_1^2 + 6x_2^2$ \cite{wJ-iK-aS-97}, which represents
all positive integers $n \equiv 0, 1 \pmod{3}$, $L$ is regular.
\end{proof}

(2) $L = \binlattice2\omega\comega3 \perp 3\binlattice2\omega\comega3$ is regular over $\Q{-6}$.
\begin{proof}
Note that
    \[
    H(\gen L) = \{ n \in \N_0 \mid n \equiv 0, 2 \pmod{3} \}.
    \]
Since $3\left( \qf{1} \perp \binlattice2\omega\comega3 \right)$ is a sublattice of $L$ and $\qf{1}
\perp \binlattice2\omega\comega3$ is universal \cite{Iwabuchi}, $L$ represents all positive
integers $n \equiv 0 \pmod{3}$. Suppose $n \equiv 2 \pmod{3}$. Put $n = n_1 + n_2$ with $n_1 \ra
\qf{1}$ and $n_2 \ra \binlattice2\omega\comega3$. Then $n_1 \equiv 0, 1 \pmod{3}$ and $n_2 \equiv
0, 2 \pmod{3}$. We have $n_1 \equiv 0 \pmod{3}$ and $n_2 \equiv 2 \pmod{3}$ from $n \equiv 2
\pmod{3}$. Since $n_1 \ra \qf{1,6}_\Z$ and $n_1 \equiv 0 \pmod{3}$, $n_1 \ra 3 \qf{2,3}_\Z$. Note
that $3 \qf{2,3}_\Z$ is an associated quadratic form of $3\binlattice2\omega\comega3$. Therefore $n
= n_1 + n_2 \ra 3\binlattice2\omega\comega3 \perp \binlattice2\omega\comega3$ and $L$ is regular.
\end{proof}

\Case{II [$m=7$]}

(1) $L = \qf{1,7}$ is regular over $\Q{-7}$.
\begin{proof}
Note that
    \[
    H(\gen L) = \{ n \in \N_0 \mid n \equiv 0, 1, 2, 4 \pmod{7} \}.
    \]
Since $\qf{7,7} = 7\qf{1,1}$ is a sublattice of $L$ and $\qf{1,1}$ is universal
\cite{agE-aK-97(1)}, $n \ra L$ for all $n \equiv 0 \pmod{7}$. On the other hand, the associated
quadratic lattice of $L$ is
\[
    x\conj{x} + 7y\conj{y} = x_1^2 + x_1 x_2 + 2 x_2^2 + 7 y_1^2 + 7 y_1 y_2 + 14 y_2^2.
\]
Since it has a regular sublattice $x_1^2 + x_1 x_2 + 2 x_2^2 + 7 y_1^2$ \cite{wJ-iK-aS-97}, which
represents all positive integers $n \equiv 1, 2, 4 \pmod{7}$, $n \ra L$ for all positive integers
$n \equiv 1, 2, 4 \pmod{7}$. Therefore $L$ is regular.
\end{proof}

(2) $L = \qf{1,14}$ is regular over $\Q{-7}$.
\begin{proof}
Note that
    \[
    H(\gen L) = \{ n \in \N_0 \mid n \equiv 0, 1, 2, 4 \pmod{7} \}.
    \]
Since $7\qf{1,2} = \qf{7,14}$ is a sublattice of $L$ and $\qf{1,2}$ is universal
\cite{agE-aK-97(1)}, $n \ra L$ for all positive integers $n \equiv 0 \pmod{7}$. So we may assume
$n$ is not a multiple of $7$ throughout this proof.

Let $\widetilde{L}$ be the associated quadratic lattice of $L$, then
    \[
    \widetilde{L} = x\conj{x} + 14y\conj{y} = x_1^2 + x_1 x_2 + 2 x_2^2 + 14 y_1^2 + 14 y_1 y_2 + 28 y_2^2
        = \binlattice{1}{1/2}{1/2}{2}_\Z \perp\binlattice{14}{7}{7}{28}_\Z.
    \]
Note that $\qf{1,7}_\Z$ and $\qf{2, 14}_\Z$ are sublattices of $\binlattice1{1/2}{1/2}2_\Z$.

\noindent (i) $n \equiv 0 \pmod{2}$: Consider an integral ternary lattice $K = \qf{2}_\Z \perp
\binlattice2114_\Z$ whose class number is one \cite{Brandt-Intrau}. It represents all positive even
integers not divisible by 7. Note that $\widetilde{L}$ has a sublattice $\qf{2, 14}_\Z \perp
\binlattice{14}77{28}_\Z = \qf{2}_\Z \perp 7 \left( \qf{2}_\Z \perp \binlattice2114_\Z \right) =
\qf{2}_\Z \perp K^7$. Suppose $n \ge 72$. Then $n \ra \qf{2}_\Z \perp K^7$ from the following
identities.
\begin{align*}
n &\equiv 1 \pmod{7} \Rightarrow n = 14k+8 =
\begin{cases}
2 \cdot 2^2 + 7 \cdot 2k & \text{ if } 7 \nmid k \\
2 \cdot 5^2 + 7 \cdot 2(k-3) & \text{ if } 7 \mid k
\end{cases}
\\
n &\equiv 2 \pmod{7} \Rightarrow n = 14k+2 =
\begin{cases}
2 \cdot 1^2 + 7 \cdot 2k & \text{ if } 7 \nmid k \\
2 \cdot 6^2 + 7 \cdot 2(k-5) & \text{ if } 7 \mid k
\end{cases}
\\
n &\equiv 4 \pmod{7} \Rightarrow n = 14k+4 =
\begin{cases}
2 \cdot 3^2 + 7 \cdot 2(k-1) & \text{ if } 7 \nmid (k-1) \\
2 \cdot 4^2 + 7 \cdot 2(k-2) & \text{ if } 7 \mid (k-1)
\end{cases}
\end{align*}

\noindent (ii) $n \equiv 1 \pmod{4}$: Consider an integral ternary lattice $N = \qf{1}_\Z \perp
\binlattice2114_\Z$. We know that the class number of $N$ is two \cite{Brandt-Intrau} and $\gen{N}$
represents all positive integers not divisible by $7$. Since $N_2$ is isotropic over $\Z_2$, $N$
represents positive integers $4k$ not divisible by $7$ \cite{miI-96}. Note that $\widetilde{L}$ has
a sublattice $\qf{1, 7}_\Z \perp \binlattice{14}77{28}_\Z = \qf{1}_\Z \perp 7 \left( \qf{1}_\Z
\perp \binlattice2114_\Z \right) = \qf{1}_\Z \perp N^7$. Suppose $n \ge 169$. Then $n \ra \qf{1}_\Z
\perp N^7$ from the following identities.
\begin{align*}
n &\equiv 1 \pmod{7} \Rightarrow n = 28k+1 =
\begin{cases}
1^2 + 7 \cdot 4k & \text{ if } 7 \nmid k \\
13^2 + 7 \cdot 4(k-6) & \text{ if } 7 \mid k
\end{cases}
\\
n &\equiv 2 \pmod{7} \Rightarrow n = 28k+9 =
\begin{cases}
3^2 + 7 \cdot 4k & \text{ if } 7 \nmid k \\
11^2 + 7 \cdot 4(k-4) & \text{ if } 7 \mid k
\end{cases}
\\
n &\equiv 4 \pmod{7} \Rightarrow n = 28k+25 =
\begin{cases}
5^2 + 7 \cdot 4k & \text{ if } 7 \nmid k \\
9^2 + 7 \cdot 4(k-2) & \text{ if } 7 \mid k
\end{cases}
\end{align*}

\noindent (iii) $n \equiv 3 \pmod{8}$: Consider an integral ternary lattice $M = \qf{11}_\Z \perp
\binlattice2114_\Z$. We know that the class number of $M$ is 5 \cite{Brandt-Intrau} and $\gen{M}$
represents all positive integers not divisible by $7$. Since $M_2$ is isotropic over $\Z_2$, $M$
represents positive integers $4^4k$ not divisible by $7$. Note that $\widetilde{L}$ has a
sublattice $\qf{11, 77}_\Z \perp \binlattice{14}77{28}_\Z = \qf{11}_\Z \perp 7 \left( \qf{11}_\Z
\perp \binlattice2114_\Z \right)= \qf{11}_\Z \perp M^7$. There are $a\in\{1,3,5\}$ and $0\le b\le
4^3-1$ such that
\[
    n \equiv 11a^2 \pmod7~  \text{ and }~ n \equiv 11(a+14b)^2 \pmod{4^4}.
\]
Let $k={n-11(a+14b)^2\over7\cdot4^4}$ and $\ell = {n-11(a+14b-7\cdot2^7)^2 \over 7\cdot4^4} =
k-11(a+14b)+11\cdot7\cdot2^6$. Then $k$ and $\ell$ are positive integers if we assume that
$n\ge11(7\cdot2^7)^2=8,830,976$. Note that both $k$ and $\ell$ are not divisible by $7$. Thus $n
\ra \qf{11}_\Z \perp M^7$.
\begin{align*}
n =
\begin{cases}
    11(a+14b)^2 + 7\cdot4^4k & \text{ if } 7 \nmid k \\
    11(a+14b-7\cdot2^7)^2 + 7\cdot4^4\ell & \text{ if } 7 \mid k
\end{cases}
\end{align*}

\noindent (iv) $n \equiv 7 \pmod8$: Consider an integral ternary lattice $R = \binlattice2114_\Z
\perp \qf{23}_\Z$. Then $\widetilde{L}$ contains $\qf{23, 161}_\Z \perp \binlattice{14}77{28}_\Z =
\qf{23}_\Z \perp R^7$. We know that the class number of $R$ is 9 and the genus of $R$ consists of
$9$ lattices \cite{Brandt-Intrau}
\begin{align*}
    & R_1 = R = \binlattice2114_\Z \perp \qf{23}_\Z,
      R_2 = \qf{1, 7, 23}_\Z,
      R_3 = \binlattice4116_\Z \perp \qf{7}_\Z, \\
    & R_4 = \binlattice211{12}_\Z \perp \qf{7}_\Z,
      R_5 = \binlattice3118_\Z \perp \qf{7}_\Z,
      R_6 = \terlattice31114010{15}_\Z, \\
    & R_7 = \qf{1}_\Z \perp \binlattice{11}22{15}_\Z,
      R_8 = \terlattice41114010{11}_\Z,
      R_9 = \qf{1}_\Z \perp \binlattice{14}77{15}_\Z.
\end{align*}
Let $f_{R_i}$ be the quadratic form corresponding to $R_i$. Then $f_R (x,y,z) = 2x^2 + 2xy + 4y^2 +
23z^2$. Note that $\gen{R}$ represents all positive integers not divisible by $7$. If $7 \nmid k$,
then $4^7k \ra R$ from the following identities:
\begin{gather*}
    f_R(4y,-x-y,2z) = 4 f_{R_2}(x,y,z), \\
    f_R(8z,-4x-y-2z,2y) = 4^2 f_{R_3}(x,y,z), \\
    f_R(16y,-2x-8y-4z,4x) = 4^3 f_{R_4}(x,y,z), \\
    f_R(16y,5x-4y-6z,2x+4z) = 4^3 f_{R_5}(x,y,z), \\
    f_R(16y+32z,10x+7y-20z,4x-2y+8z) = 4^4 f_{R_6}(x,y,z),\\
    f_R(16x-16y+64z,7x+33y-40z,-2x+18y+16z) = 4^5 f_{R_7}(x,y,z),\\
    f_R(48x-64y-32z,-47x-28y+66z,18x+8y+36z) = 4^6 f_{R_8}(x,y,z),\\
    f_R(48x-240y-304z,-47x+123y-65z,18x+70y+14z) = 4^7 f_{R_9}(x,y,z).
\end{gather*}
There are $a\in\{1,3,5\}$ and $0\le b\le4^6-1=4095$ such that
\[
    n \equiv 23a^2 \pmod7~ \text{ and } n \equiv 23(a+14b)^2
    \pmod{4^7}.
\]
Let $k = \frac{n-23(a+14b)^2}{7\cdot4^7}$ and $\ell=\frac{n-23(a+14b-7\cdot4^7)^2}{7\cdot4^7} = k -
23\cdot2(a+14b) + 23\cdot7\cdot4^7$. Then $k$ and $\ell$ are positive integers if we assume that $n
\ge 23(7\cdot2^{14})^2 = 302,526,758,912$. Note that both $k$ and $\ell$ are not divisible by $7$.
Thus $n \ra \qf{23}_\Z \perp R^7$.
\begin{align*}
n =
\begin{cases}
    23(a+14b)^2 + 7\cdot4^7k & \text{ if } 7 \nmid k \\
    23(a+14b-7\cdot4^7)^2 + 7\cdot4^7\ell & \text{ if } 7 \mid k
\end{cases}
\end{align*}

We checked over $n$ represented by $\widetilde{L}$ for all $n \leq 302,526,758,912$ by computer
calculation. Therefore $L$ is regular.
\end{proof}

\Case{II [$m=10$]}

(1) $L=\qf{1,5}$ is regular over $\Q{-10}$.
\begin{proof}
Note that
    \[
    H(\gen L) = \{ n \in \N_0 \mid n \equiv 0, 1, 4 \pmod{5} \}.
    \]
The associated quadratic lattice is
    \[
    x\conj{x} + 5y\conj{y} = x_1^2 + 10 x_2^2 + 5 y_1^2 + 50 y_2^2.
    \]
It has a regular sublattice $\ell = x_1^2 + 10 x_2^2 + 5 y_1^2$ \cite{wJ-iK-aS-97}, which
represents all positive integers except $n = 5^e u_{-}$. Now suppose $n=5^2(5k+2)$ or
$n=5^2(5k+3)$, for some integer $k$. Since $n-50\cdot1^2$ or $n-50\cdot2^2$ is represented by
$\ell$, hence $n \ra L$ when $n \geq 50 \cdot 2^2$. It is an easy work to check $n \ra L$ for $n <
50 \cdot 2^2$. Therefore $L$ is regular.
\end{proof}

\Case{II [$m=11$]}

(1) $L=\qf{1,4}$ is regular over $\Q{-11}$.
\begin{proof}
Note that
    \[
    H(\gen L) = \{ n \in \N_0 \mid n \equiv 1 \pmod{2} \text{ or } n \equiv 0 \pmod{4}\}.
    \]
Since $L$ contains $4\qf{1,1} = \qf{4,4}$ and $\qf{1,1}$ is universal \cite{agE-aK-97(1)}, $n \ra
L$ for all positive integers $n \equiv 0 \pmod{4}$. Note that the associated quadratic lattice of
$L$ is
\[
    x\conj{x} + 4 y\conj{y} = x_1^2 + x_1 x_2 + 3x_2^2 + 4 y_1^2 + 4 y_1 y_2 + 12 y_2^2.
\]
Since $x_1^2 + x_1 x_2 + 3x_2^2 + 4 y_1^2$ is regular \cite{wJ-iK-aS-97} and represents all
positive integers $n \equiv 1 \pmod{2}$ and $n \not= 11^{d} u_{+}$, these $n$ are represented by
$L$. Suppose $n \equiv 1 \pmod{2}$ and $n=11^du_{+}$. Since $n/11 \ra L$, $n \ra L$. Therefore $L$
is regular.
\end{proof}

(2) $L=\qf{1,11}$ is regular over $\Q{-11}$.
\begin{proof}
Note that
    \[
    H(\gen L) = \{ n \in \N_0 \mid n \equiv 0, 1, 3, 4, 5, 9 \pmod{11} \}.
    \]
Since $L$ contains $11\qf{1,1}= \qf{11,11}$ and $\qf{1,1}$ is universal \cite{agE-aK-97(1)}, $n \ra
L$ for all positive integers $n \equiv 0 \pmod{11}$. Note that the associated quadratic lattice of
$L$ is
\[
    x\conj{x} + 11 y\conj{y} = x_1^2 + x_1 x_2 + 3x_2^2 + 11 y_1^2 + 11 y_1 y_2 + 33 y_2^2.
\]
Since $x_1^2 + x_1 x_2 + 3x_2^2 + 11 y_1^2$ is regular \cite{wJ-iK-aS-97} and represents all
positive integers $n \equiv 1, 3, 4, 5, 9 \pmod{11}$, $L$ represents these all integers.  Therefore
$L$ is regular.
\end{proof}

(3) $L=\qf{1,44}$ is regular over $\Q{-11}$.
\begin{proof}
Note that
    \begin{align*}
    H(\gen L) = &\{ n \in \N_0 \mid n \equiv 1 \pmod{2} \text{ or } n \equiv 0 \pmod{4} \} \\
                &\cap \{ n \in \N_0 \mid n \equiv 0,1,3,4,5,9 \pmod{11} \}.
    \end{align*}
The associated quadratic lattice of $L$ is
    \[
    x_1^2 + x_1x_2 + 3x_2^2 + 44y_1^2 + 44y_1y_2 + 132y_2^2.
    \]
The sublattice $x_1^2 + x_1x_2 + 3x_2^2 + 44y_1^2$ is regular \cite{wJ-iK-aS-97} and it represents
all positive integers $n \equiv 0, 1, 3 \pmod{4}$ and $n \ne 11^e u_{-}$. Since $11\qf{1,4}$ is a
sublattice of $L$ and $\qf{1,4}$ is regular (see Case II [$m=11$] (1)), $L$ represents all positive
integers $n \equiv 0 \pmod{11}$ and $n \equiv 0, 1, 3 \pmod{4}$. Therefore $L$ is regular.
\end{proof}

\Case{II [$m=15$]}

(1) $L=\qf{1,3}$ is regular over $\Q{-15}$.
\begin{proof}
Note that
    \[
    H(\gen L) = \{ n \in \N_0 \mid n \equiv 0, 1 \pmod{3} \}.
    \]
Since $3\left( \qf{1} \perp \binlattice2\omega\comega2 \right)$ is a sublattice of $L$ and $\qf{1}
\perp \binlattice2\omega\comega2$ is universal \cite{Iwabuchi}, $n \ra L$ for all positive integers
$n \equiv 0 \pmod{3}$. Suppose $n \equiv 1 \pmod{3}$.
Then $n \ra \qf{1} \perp \binlattice2\omega\comega2$ by universality. If $n = n_1 + n_2$ such that
$n_1 \ra \qf{1}$ and $n_2 \ra \binlattice2\omega\comega2$, then we have $n_1 \equiv 1 \pmod{3}$ and
$n_2 \equiv 0 \pmod{3}$, because $\qf{1}$ represents $n_1 \equiv 0, 1 \pmod{3}$ and
$\binlattice2\omega\comega2$ represents $n_2 \equiv 0, 2 \pmod{3}$. Since $n_2 = 2x_1^2 + x_1 x_2 +
2x_2^2$ has an integral solution with $x_1+x_2 \equiv 0 \pmod{3}$, $n_2 = 3 \left[
x_1-(1+\omega)\frac{x_1+x_2}{3} \right]\conj{\left[ x_1-(1+\omega)\frac{x_1+x_2}{3} \right]} \ra
\qf{3}$. So $n = n_1 + n_2 \ra \qf{1, 3}$ and $L$ is regular.
\end{proof}

(2) $L=\qf{1,5}$ is regular over $\Q{-15}$.
\begin{proof}
Note that
    \[
    H(\gen L) = \{ n \in \N_0 \mid n \equiv 0, 1, 4 \pmod{5} \}.
    \]
The associated quadratic lattice of $L$ is
\[
    x\conj{x} + 5y\conj{y} = x_1^2 + x_1 x_2 + 4 x_2^2 + 5 y_1^2 + 5 y_1 y_2 + 20 y_2^2.
\]
Since it has a regular sublattice $x_1^2 + x_1 x_2 + 4 x_2^2 + 5 y_1^2$ \cite{wJ-iK-aS-97}, which
represents all positive integers $n \equiv 1, 4 \pmod{5}$, $L$ represents all positive integers $n
\equiv 1, 4 \pmod{5}$. Since $5\left( \qf{1} \perp \binlattice2\omega\comega2 \right)$ is a
sublattice of $L$ and $\qf{1} \perp \binlattice2\omega\comega2$ is universal \cite{Iwabuchi}, $n
\ra L$ for all positive integers $n \equiv 0 \pmod{5}$. Therefore $L$ is regular.
\end{proof}

(3) $L=\binlattice2\omega\comega2 \perp \qf{5}$ is regular over $\Q{-15}$.

\begin{proof}
Note that
    \[
    H(\gen L) = \{ n \in \N_0 \mid n \equiv 0, 2, 3 \pmod{5} \}.
    \]
The associated quadratic lattice $\widetilde{L}$ of $L$
    \[
    2x_1^2 + x_1 x_2 + 2x_2^2 + 5y_1^2 + 5y_1 y_2 + 20y_2^2
    \]
has a regular sublattice $2x_1^2 + x_1 x_2 + 2x_2^2 + 5 y^2$ \cite{wJ-iK-aS-97}, which represents
all positive integers $n \equiv 0,2,3 \pmod{5}$ and $n \ne 3^d u_{-}$. Suppose $n \equiv 0,2,3
\pmod{5}$ and $n = 3^d u_{-}$. We may consider $n=3(15k+5)$, $n=3(15k+11)$ or $n=3(15k+14)$ for
some nonnegative integers $k$. Consider a quadratic sublattice $2x_1^2 + x_1 x_2 + 2x_2^2 + 5 y^2 +
75 z^2$ of $\widetilde{L}$. Then $n-75\cdot1^2$ is not of the form $3^d u_{-}$ and hence it is
represented by $2x_1^2 + x_1 x_2 + 2x_2^2 + 5 y^2$. The representability of  $3\cdot5$, $3\cdot20$,
$3\cdot11$, $3\cdot14$ by $L$ is easily checked. Therefore, $L$ is regular.
\end{proof}

(4) $L=\binlattice2\omega\comega2 \perp 3\binlattice2\omega\comega2$ is regular over $\Q{-15}$.

\begin{proof}
Note that
    \[
    H(\gen L) = \{ n \in \N_0 \mid n \equiv 0, 2 \pmod{3} \}.
    \]
Since $3 \ra \binlattice2\omega\comega2$, $3\left(\qf{1}\perp\binlattice2\omega\comega2\right)$ is
a sublattice of $L$. Since the lattice $\qf{1}\perp\binlattice2\omega\comega2$ is universal
\cite{Iwabuchi}, $L$ represents all positive integers $n \equiv 0 \pmod{3}$. Suppose $n \equiv 2
\pmod{3}$. If $n=n_1+n_2$ such that $n_1 \ra \qf{1}$ and $n_2 \ra \binlattice2\omega\comega2$, then
we have $n_1 \equiv 0,1 \pmod{3}$ and $n_2 \equiv 0,2 \pmod{3}$. Since $n \equiv 2 \pmod{3}$, $n_1
\equiv 0 \pmod{3}$. Then $n_1 = (x_1+x_2 \omega)\conj{(x_1+x_2 \omega)}$ has an integral solution
with $x_1 \equiv x_2 \pmod{3}$. Since $n_1 =
6\alpha\conj{\alpha}+3\omega\alpha\conj{\beta}+3\comega\conj{\alpha}\beta+6\beta\conj{\beta}$ with
$\alpha = \frac{x_1-y_1}{3}$ and $\beta = -\frac{x_1+2x_2}{3}$, $n_1 \ra
3\binlattice2\omega\comega2$ and $L$ is regular.
\end{proof}

(5) $L=\binlattice2\omega\comega2 \perp \qf{9}$ is regular over $\Q{-15}$.

\begin{proof}
Note that
    \[
    H(\gen L) = \{ n \in \N_0 \mid n \equiv 2 \pmod{3} \text{ or } n \equiv 0,3 \pmod{9} \}.
    \]
Since $3\qf{1,3}$ is a sublattice of $L$ and $\qf{1,3}$ is a regular lattice which represents all
positive integers $n \equiv 0,1 \pmod{3}$ (see Case[$m=15$] (1)), $L$ represents all positive
integers $n \equiv 0, 3 \pmod{9}$. Suppose $n \equiv 2 \pmod{3}$. The associated quadratic lattice
$\widetilde{L}$ of $L$
    \[
    x_1^2 + x_1 x_2 + x_2^2 + 9y_1^2 + 9y_1 y_2 + 36 y_2^2
    \]
has a regular sublattice $\ell = x_1^2 + x_1 x_2 + x_2^2 + 9y_1^2$ \cite{wJ-iK-aS-97} and it
represents all positive integers $n$ if $n \ne 5^d u_{-}$. Suppose $n=5u_{-}$. Then $n=5(15k+7),
5(15k+13), 5(45k+33)$ or $5(45k+42)$. The quadratic lattice $\widetilde{L}$ has the sublattice
$x_1^2 + x_1 x_2 + x_2^2 + 9y^2 + 135 z^2$. Then, $5(15k+7)-135\cdot2^2$, $5(15k+13)-135\cdot1^2$,
$5(45k+33)-135\cdot1^2$ and $5(45k+42)-135\cdot2^2$ are all represented by $\ell$ when $n \geq 135
\cdot 2^2$. The representability for $n < 135\cdot2^2$ is easily checked. Therefore $L$ is regular.
\end{proof}

(6) $L=\binlattice2\omega\comega2 \perp \qf{15}$ is regular over $\Q{-15}$.

\begin{proof}
Note that
    \[
    H(\gen L) = \{ n \in \N_0 \mid n \equiv 0, 2 \pmod{3} \text{ and } n \equiv 0,2,3 \pmod{5} \}.
    \]
Since $15\left(\qf{1}\perp\binlattice2\omega\comega2\right)$ is a sublattice of $L$ and
$\qf{1}\perp\binlattice2\omega\comega2$ is universal, $L$ represents all positive integers
divisible by $15$. The associated quadratic lattice $\widetilde{L}$ of $L$ is
    \[
    2x_1^2 + x_1x_2 + 2x_2^2 + 15y_1^2 + 15y_1y_2 + 60y_2^2.
    \]
The sublattice $\ell = 2x_1^2 + x_1x_2 + 2x_2^2 + 15y_1^2$ is regular \cite{wJ-iK-aS-97} and it
represents all positive integers $n \equiv 0, 2 \pmod{3}$ and $n \ne 5^{2s}u_{+}$ with $s \geq 1$.

It is enough to consider positive integers $n$ such that $n \equiv 2 \pmod{3}$ and $n = 5^2u_{+}$.
Suppose $n = 5^2(15k+11)$ or $n = 5^2(15k+14)$. The quadratic lattice $\widetilde{L}$ has a
sublattice $2x_1^2 + x_1x_2 + 2x_2^2 + 15y_1^2 + 225z^2$. Then, $n-225\cdot1^2$ and $n-225\cdot2^2$
are represented by $\ell$ when $n \geq 225\cdot2^2$. The representability for $n < 225\cdot2^2$ is
easily checked. Therefore $L$ is regular.
\end{proof}

\begin{Thm}
There are $68$ binary regular normal Hermitian lattices, including $9$ nondiagonal lattices, up to
isometry over $\Q{-m}$ with positive square-free integers $m$. In Table \ref{tbl:Binary regular
Hermitian Lattices: normal case}, $25$ binary universal Hermitian lattices are marked with $\dag$.
\begin{footnotesize}
\begin{table}[h]
\centering

\begin{tabular}{lll} \hline
$\Q{-m}$  & {\rm binary regular Hermitian lattices ($\univ$: universal)} \\ \hline%
$\Q{-1}$  & $\qf{1,1}^\univ$, $\qf{1,2}^\univ$,
            $\qf{1,3}^\univ$, $\qf{1,4}$, $\qf{1,8}$, $\qf{1,16}$,\\
          & $\binlattice{2}{-1+\omega_{1}}{-1+\comega_{1}}3$,
            $\binlattice{3}{-1+\omega_{1}}{-1+\comega_{1}}6$,
            $\binlattice3113$ \\

$\Q{-2}$  & $\qf{1,1}^\univ$, $\qf{1,2}^\univ$, $\qf{1,3}^\univ$,
            $\qf{1,4}^\univ$, $\qf{1,5}^\univ$,
            $\qf{1,8}$, $\qf{1, 16}$, $\qf{1, 32}$,
            $\binlattice{2}{\omega_{2}}{\comega_{2}}5$\\

$\Q{-3}$  & $\qf{1,1}^\univ$, $\qf{1,2}^\univ$,
            $\qf{1,3}$, $\qf{1,4}$, $\qf{1,6}$, $\qf{1,9}$,$\qf{1,12}$, $\qf{1,36}$, $\qf{2,3}$,\\
          & $\binlattice2112$, $\binlattice2115$,
            $\binlattice{3}{1+\omega_{3}}{1+\comega_{3}}5$,
            $\binlattice{5}{2}{2}{8}$\\

$\Q{-5}$  & $\qf{1,2}^\univ$, $\qf{1}\perp\binlattice2{-1+\omega_{5}}{-1+\comega_{5}}3^\univ$, $\qf{1,8}$, $\qf{1,10}$, $\qf{1,40}$, \\
          & $\qf{1}\perp 5\binlattice2{-1+\omega_{5}}{-1+\comega_{5}}3$,
            $\binlattice{2}{-1+\omega_{5}}{-1+\comega_{5}}3 \perp\qf{4}$,\\
          & $\binlattice{2}{-1+\omega_{5}}{-1+\comega_{5}}3 \perp\qf{5}$,
            $\binlattice{2}{-1+\omega_{5}}{-1+\comega_{5}}3 \perp\qf{20}$ \\

$\Q{-6}$  & $\qf{1}\perp\binlattice{2}{\omega_6}{\comega_6}3^\univ$,
            $\qf{1,3}$,
            $\binlattice2{\omega_6}{\comega_6}3 \perp 3\binlattice2{\omega_6}{\comega_6}3$\\

$\Q{-7}$  & $\qf{1,1}^\univ$, $\qf{1,2}^\univ$, $\qf{1,3}^\univ$, $\qf{1,7}$, $\qf{1,14}$, $\binlattice3{\omega_7}{\comega_7}3$\\

$\Q{-10}$ & $\qf{1}\perp\binlattice{2}{\omega_{10}}{\comega_{10}}5^\univ$, $\qf{1,5}$ \\

$\Q{-11}$ & $\qf{1,1}^\univ$, $\qf{1,2}^\univ$, $\qf{1,4}$, $\qf{1,11}$, $\qf{1,44}$\\

$\Q{-15}$ & $\qf{1} \perp
            \binlattice{2}{\omega_{15}}{\comega_{15}}2^\univ$,
            $\qf{1,3}$,$\qf{1,5}$,
            $\binlattice2{\omega_{15}}{\comega_{15}}2 \perp \qf{5}$, \\
          & $\binlattice2{\omega_{15}}{\comega_{15}}2 \perp 3\binlattice2{\omega_{15}}{\comega_{15}}{2}$,
            $\binlattice2{\omega_{15}}{\comega_{15}}2 \perp \qf{9}$,
            $\binlattice2{\omega_{15}}{\comega_{15}}2 \perp \qf{15}$ \\

$\Q{-19}$ & $\qf{1,2}^\univ$ \\

$\Q{-23}$ & $\qf{1}\perp\binlattice{2}{\omega_{23}}{\comega_{23}}3^\univ$,
            $\qf{1}\perp\binlattice{2}{-1+\omega_{23}}{-1+\comega_{23}}3^\univ$ \\

$\Q{-31}$ & $\qf{1}\perp\binlattice{2}{\omega_{31}}{\comega_{31}}4^\univ$,
            $\qf{1}\perp\binlattice{2}{-1+\omega_{31}}{-1+\comega_{31}}4^\univ$ \\

\hline
\end{tabular}
\caption{Binary regular Hermitian lattices: normal case} %
\label{tbl:Binary regular Hermitian Lattices: normal case}%
\end{table}
\end{footnotesize}
\end{Thm}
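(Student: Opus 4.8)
The statement is the summary of the case analysis carried out in Sections~\ref{sec:WatsonT}--\ref{List}, so the proof is an assembly job organized into three stages: restrict the admissible fields, produce a finite candidate list for each, and sort the candidates into regular ones (verified) and non-regular ones (discarded). First, by the Proposition in Section~\ref{sec:WatsonT} a \emph{non}-universal binary normal regular Hermitian lattice can occur only over $\Q{-m}$ with $m \in \{1,2,3,5,6,7,10,11,15,19,23,31\}$, while the binary normal \emph{universal} Hermitian lattices over every imaginary quadratic field are already classified in \cite{agE-aK-97(1)}, \cite{Iwabuchi}, \cite{KimJH-Park}; these contribute exactly the $25$ lattices marked $\univ$ in Table~\ref{tbl:Binary regular Hermitian Lattices: normal case}. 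Hence it remains to determine, for the twelve listed $m$, the non-universal regular lattices.

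For each such $m$ one runs the escalation procedure of Section~\ref{sec:Candidates}. The case division is governed by which units and scaled units are represented over $\Z_2$ and over each $\Z_p$ with $p\mid m$ — together with $p=3$ for $m=1,7$ and $p=5$ for $m=2$, the only odd primes with $(p,m)=1$ at which $L$ can still fail to be locally universal, as shown just before Case~I. In every resulting case the Lemma bounding $\Frv L$ gives an explicit finite upper bound on the volume, so the essential lattice $\ell=[a,b]$-type $[a,\alpha,b]$ built from the two essential numbers, after Minkowski reduction and (where $\Frv\ell\subsetneq\Frv L$) one further escalation step meeting the volume and rank constraints, ranges over a finite list. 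This yields precisely the candidate lists recorded in Tables~\ref{tbl:m=1}--\ref{tbl:m=31}, the empty cases being the ones labelled ``N.A.''.

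Next, any candidate whose table row carries an exceptional number $n$ satisfies $n\ra\gen L$ but $n\nra L$ and is discarded; in a handful of cases (for $m=5,6,10$) a candidate is instead ruled out by applying a Watson transformation $\lambda_{p^{k}}$, which preserves regularity, and combining Lemma~\ref{Lem:local_universality_of_Watson_transformation} with an already-excluded case. Each survivor is then shown to be regular: if its class number is one — the lattices tabulated at the start of Section~\ref{List} — this is automatic since $n\ra\gen L\Rightarrow n\ra L$; otherwise the explicit arguments of Case~I and Case~II in Section~\ref{List} apply, each expressing $H(\gen L)$ as a union of residue classes covered either by suitable integral scalings of smaller regular Hermitian lattices already treated (together with the universality of small binary lattices) or by a regular ternary quadratic sublattice drawn from the Jagy--Kaplansky--Schiemann list \cite{wJ-iK-aS-97}. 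Collecting the survivors over all $m$ and adjoining the $25$ universal lattices produces Table~\ref{tbl:Binary regular Hermitian Lattices: normal case}, and a direct count gives $68$ lattices in all, of which $9$ are nondiagonal.

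The main obstacle is the regularity of $L=\qf{1,14}$ over $\Q{-7}$ in Case~II $[m=7]$~(2), where no regular ternary quadratic sublattice is available. The plan there is to control the integers represented by the associated quaternary form $\widetilde L$ through the \emph{non-regular} ternary forms $\qf{c}_\Z\perp[2,1,4]_\Z$ for $c=1,2,11,23$: each represents all sufficiently large integers in a fixed square class not divisible by $7$ (using the $\Z_2$-isotropy of the relevant completions and the genus theory of ternary forms \cite{miI-96}), the scalings $\qf{c}_\Z\perp M^{7}$ embedded in $\widetilde L$ turn this into representability of all large $n$ of the prescribed residue types, and the finitely many remaining $n$ — up to roughly $3\times 10^{11}$, the threshold forced by the last of the displayed identities — are cleared by direct computer check. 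Making that bound effective, and verifying that the residual computation is genuinely finite, is the crux of the whole classification.
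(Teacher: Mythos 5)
Your proposal follows the paper's own argument essentially verbatim: restrict $m$ via the Proposition from the Watson-transformation section, generate the finite candidate lists by escalation under the volume bounds, discard candidates with exceptional numbers or via $\lambda_{p^k}$ reductions to excluded cases, and confirm the survivors through class-number-one lattices, scaled regular sublattices, and regular ternary forms from \cite{wJ-iK-aS-97}, with $\qf{1,14}$ over $\Q{-7}$ handled exactly as in the paper by the non-regular ternaries $\qf{c}_\Z\perp[2,1,4]_\Z$ ($c=1,2,11,23$) plus the computer check up to $23(7\cdot2^{14})^2$. This is correct and is the same proof as the paper's.
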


\begin{Rmk}
Rokicki listed $55$ binary regular \emph{diagonal} Hermitian normal lattices over $\Q{-m}$
including a candidate $\qf{1,14}$ over $\Q{-7}$ up to isometry. We confirm her list and make up a
list of all binary regular Hermitian normal lattices including (1) four diagonal lattices
$\qf{1}\perp 5\binlattice2{-1+\omega_{5}}{-1+\comega_{5}}3$ over $\Q{-5}$,
$\binlattice2{\omega_{6}}{\comega_{6}}3 \perp 3\binlattice2{\omega_{6}}{\comega_{6}}3$ over
$\Q{-6}$, and $\binlattice2{\omega_{15}}{\comega_{15}}2 \perp\qf{5}$,
$\binlattice2{\omega_{15}}{\comega_{15}}2 \perp\qf{9}$ over $\Q{-15}$ which she missed, (2) a
universal lattice $\qf{1,3}$ over $\Q{-7}$ uncaught at that time and (3) nine nondiagonal lattices.
Also we provide the complete proof of regularity of each lattice. Some of Rokicki's proofs are
corrected and simplified.
\end{Rmk}

\begin{Rmk}
Among the Hermitian lattices, we have some interesting lattices. For example, a lattice
$\binlattice2{\omega_{6}}{\comega_{6}}3 \perp 3\binlattice2{\omega_{6}}{\comega_{6}}3$ is isometric
to $\binlattice9{4\omega_{6}}{4\comega_{6}}{11}$ over $\Q{-6}$. The former is a direct sum of
nonfree unary lattices and the latter is a binary free lattice. And we have another example of the
same phenomenon. A lattice $\binlattice2{\omega_{15}}{\comega_{15}}2 \perp
3\binlattice2{\omega_{15}}{\comega_{15}}2$ is isometric to
$\binlattice8{-3+4\omega_{15}}{-3+4\comega_{15}}8$ over $\Q{-15}$. On the other hand, although both
$\binlattice2112$ over $\Q{-3}$ and $\binlattice8{-3+4\omega_{15}}{-3+4\comega_{15}}8$ over
$\Q{-15}$ have even diagonal entries, they can represent odd integers. So they are normal lattices.
\end{Rmk}

\begin{Rmk}
The binary subnormal regular Hermitian lattices will be investigated in our next articles. Binary
subnormal regular Hermitian lattices over $\Q{-m}$ with norm ideal $2\Fro$ occur only when
\[
    m = 1, 2, 5, 6, 10, 13, 14, 22, 29, 34, 37 \text{ and } 38.
\]
Also, we know that binary subnormal regular Hermitian lattices exist over $\Q{-m}$ whose norm ideal
is $m\Fro$. For example, $\binlattice3{\sqrt{-3}}{-\sqrt{-3}}3$ over $\Q{-3}$ is a binary subnormal
regular Hermitian lattice with norm ideal $3\Fro$. It is an impossible phenomenon for quadratic
lattices over $\Z$.
\end{Rmk}

%%%%%%%%%%%%%%%%%%%%%%%%%%%%%%%%%%%%%%%%%%%%%%%%%%%%%%%%%%%%%%%%%%%%%%%%%%%%%%%%%%%%%%%%%%%%%%

\end{document}